\theoremstyle{plain}
\newtheorem{Theorem}{Theorem}
\newtheorem{Corollary}{Corollary}
\newtheorem{Lemma}{Lemma}
\newtheorem{Proposition}{Proposition}
\theoremstyle{Definition}
\newtheorem{Definition}{Definition}
\theoremstyle{Remark}
\newtheorem{Remark}{Remark}
\numberwithin{equation}{section}
\def\fa{{\mathcal{F}}}
\def\O{{\mathcal{O}}}
\def\po{{\partial}}
\def\la{{\lambda}}
\def\al{{\alpha}}
\def\bc{{\mathbb{C}}}
\def\rank{\operatorname{{rank}}}
\def\dim{\operatorname{{dim}}}
\def\Aff{\operatorname{{Aff}}}
\def\Id{\operatorname{{Id}}}
\def\Diff{\operatorname{{Diff}}}
\def\sing{\operatorname{{sing}}}
\def\Sing{\operatorname{{sing}}}
\def\codim{\operatorname{{codim}}}
\def\deg{\operatorname{{deg}}}
\def\rank{\operatorname{{rank}}}
\def\leaderfill{\leaders\hbox to .8em{\hss .\hss}\hfill}
\def\_#1{{\lower 0.7ex\hbox{}}_{#1}}
\def\fa{{\mathcal{F}}}
\def\O{{\mathcal{O}}}
\def\po{{\partial}}
\def\la{{\lambda}}
\def\al{{\alpha}}
\def\bc{{\mathbb{C}}}
\def\dim{\operatorname{{dim}}}
\def\Aff{\operatorname{{Aff}}}
\def\Id{\operatorname{{Id}}}
\def\Diff{\operatorname{{Diff}}}
\def\sing{\operatorname{{sing}}}
\def\Sing{\operatorname{{sing}}}
\def\codim{\operatorname{{codim}}}
\def\deg{\operatorname{{deg}}}
\def\rank{\operatorname{{rank}}}
\begin{document}
\title{Transversely Lie  holomorphic  foliations  on projective spaces}
\author{Albet\~a Mafra and Bruno Sc\'ardua}

\date{}
{\tiny \maketitle }

%\tableofcontents

\pagenumbering{arabic}

\begin{abstract}
We prove that a one-dimensional foliation with generic
singularities on a projective space, exhibiting a Lie group
transverse structure in the complement of some codimension one
algebraic subset is logarithmic, i.e., it is the intersection of
codimension one foliations given by closed one-forms with simple
poles. If there is only one singularity in a suitable affine
space, then the foliation is induced by a linear diagonal vector
field.
\end{abstract}

\section{Introduction} \label{Section:intro}

In the study of foliations it is very useful to consider the
transverse structure\footnote{MSC Classification: 57R30, 22E15,
22E60.}\footnote{Keywords: Foliation, Lie transverse structure,
fibration.}. Among the simplest transverse structures are Lie
group transverse structure, homogeneous transverse structure and
Riemannian transverse structure. In the present work we consider
foliations with a Lie group transverse structure. In few words,
this means that the foliation is given by an atlas of submersions
taking values on a given Lie group $G$ and with transition maps
given by restrictions of left-translations on the  group $G$,
called the {\em transverse model} for $\fa$. We shall refer to
such a foliation as a {\em $G$-foliation}. The theory of
$G$-foliations is a well-developed subject and follows the
original work of Blumenthal \cite{Blumenthal}. In codimension one,
the Riemann-Koebe uniformization theorem implies that any
holomorphic $G$-foliation is indeed {\it transversely additive}
({\it i.e.}, of transverse model the additive group of complex
numbers) and therefore given by a closed holomorphic one-form
(\cite{Scardua1}). Using this and some extension techniques from
Holomorphic Foliations  it is possible to make an accurate study
of codimension one holomorphic foliations on complex projective
spaces admitting a Lie group transverse structure on the
complement of some invariant algebraic subset (of codimension one
in the nontrivial case) (\cite{Scardua1}, \cite{Scardua5}). On the
other hand, as for the local situation, it is possible to prove
that a nondegenerate isolated singularity of a holomorphic
one-form in dimension two, admitting a $G$-transverse structure in
the complement of its set of local separatrices which is assumed
to be composed of only finitely many curves, is analytically
equivalent to its formal normal form as introduced in
\cite{Martinet-Ramis1} and \cite{Martinet-Ramis2}. Our current aim
is to motivate the study of the codimension $\ell \geq 2$ case
beginning with the more basic situation. More precisely, we study
one-dimensional holomorphic foliations on complex projective
spaces, with {\em generic singularities}, and admitting a Lie
group transverse structure in the complement of some codimension
one algebraic invariant subset.

In codimension one,  a very basic example of foliation with
generic singularities and with Lie group transverse structure in
the complement of some algebraic hypersurface is the class of
logarithmic foliations. A {\it logarithmic foliation\,} on a
projective manifold $V$ is one given by a closed rational one-form
$\eta$ with simple poles. A {\it Darboux foliation} on $V$ is a
logarithmic foliation given by a rational one-form $\eta$ as
follows: $\eta=\sum\limits_{j=1}^r \lambda_j \dfrac{df_j}{f_j}$,
\, where the $f_j$ are rational functions on $V$ and $\lambda_j
\in \bc\setminus \{0\}$ (see \cite{Scardua5}  and \cite{Omegar}
for more on logarithmic
 and Darboux foliations). It is well-known that  any logarithmic
one-form $\eta$ in $\bc {\bf P}^m$ is of Darboux type
(\cite{Scardua1}). We extend these notions in a natural way. By a
{\it logarithmic foliation} of dimension one on a manifold $V^m$
we mean a foliation which is given by a system of $m-1$ closed
meromorphic one-forms $\eta_1,...,\eta_{m-1}$, all with simple
poles and linearly independent in the complement of their sets of
poles. A {\it Darboux foliation} of dimension one is a logarithmic
foliation given by one-forms $\eta_j$ of Darboux type. An isolated
singularity of a holomorphic vector field $X$ is {\it without
resonances} if the eigenvalues of the linear part $DX$ at the
singular point, are linearly independent over $\mathbb Q$. Local
linearization of such a (nonresonant) singularity is granted if
this singularity is in the {\it Poincar\'e domain}, i.e., the
convex hull of its eigenvalues does not contain the origin. In the
other case, i.e., in the {\it Siegel domain} there are Diophantine
conditions that assure local linearization (\cite{Brjuno},
 \cite{Siegel}).

Our main result below is a first step in the comprehension of the
possible Lie group  and homogeneous transverse structures (see
\cite{Godbillon}) for codimension $\ell\geq 1$ holomorphic
foliations on projective spaces.
\begin{Theorem}
\label{Theorem:main} Let $\fa$ be a one-dimensional holomorphic
foliation with singularities on $\bc {\bf P}^m$. Assume that:
\begin{enumerate}
\item The singularities of $\fa$ are linearizable without
resonances.

\item There is an invariant codimension one algebraic subset
$\Lambda \subset \bc {\bf P}^m$  such that $\fa$ has a
$G$-transverse structure in $\bc {\bf P}^m \setminus \Lambda$.
\end{enumerate}

Then $\fa$ is a logarithmic foliation.

If  moreover $\sharp [\sing(\fa) \cap \bc ^m]=1$ for an affine
space $\bc ^m \subset \bc {\bf P}^m$ with $\bc {\bf P}^m\setminus
\bc^m$ in general position with respect to $\fa$, then $\fa$ is
linearizable, {\it i.e.}, $\fa\big|_{\bc^m}$ is induced by a
vector field $X$ that can be put in the linear diagonal form
$X=\sum\limits_{j=1}^m a_j x_j \frac{\partial}{\partial x_j}$.

\end{Theorem}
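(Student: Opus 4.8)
The plan is to produce from the $G$-structure a defining system of $m-1$ closed meromorphic one-forms with simple poles, and then to read off the linear field in the single-singularity case. On $U=\bc{\bf P}^m\setminus\Lambda$ the codimension $m-1$ Lie structure yields holomorphic one-forms $\omega_1,\dots,\omega_{m-1}$, pointwise linearly independent, whose common kernel is $T\fa|_U$ and which satisfy the Maurer--Cartan equations
\[
d\omega_i=-\sum_{j<k}c^i_{jk}\,\omega_j\wedge\omega_k
\]
of the Lie algebra $\mathfrak g$ of $G$. At a singularity $p\in\sing(\fa)$, hypothesis (1) gives coordinates in which $\fa$ is generated by $X=\sum_{j=1}^m a_jx_j\frac{\partial}{\partial x_j}$ with the $a_j$ independent over $\bq$. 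For such an $X$ the only invariant hypersurfaces through $p$ are the coordinate hyperplanes $\{x_j=0\}$; as $\fa$ is transversely Lie off $\Lambda$, these separatrices must lie in $\Lambda$. In this chart $\fa$ is defined by the closed logarithmic forms $\eta_j=\frac{dx_{j+1}}{x_{j+1}}-\frac{a_{j+1}}{a_1}\frac{dx_1}{x_1}$, $j=1,\dots,m-1$, whose residues along the $\{x_k=0\}$ are the ratios $a_k/a_1$.

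The main obstacle is to upgrade the abstract model $G$ to the additive group, that is, to show $\fa|_U$ is given by closed forms; in codimension one this is automatic by uniformization, but here it must be forced by the singularities. Near each $p$ the independent forms $\omega_i$ and the closed $\eta_j$ define the same line field, so $\omega=A\,\eta$ for an invertible matrix $A$ of holomorphic functions; substituting into the structure equations and using $d\eta=0$ expresses the $c^i_{jk}$ through $dA$. The $\bq$-independence of the eigenvalues makes the transverse holonomy generated around the separatrices a translation pseudogroup admitting no nonabelian relations, and the local logarithmic first integrals provide an additive developing chart at $p$; since the developing map is determined by $\fa$ and the linearizable singularities are spread over $\Lambda$, this forces $c^i_{jk}=0$. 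After a constant change of basis the $\omega_i$ are therefore closed holomorphic one-forms on $U$.

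It then remains to extend and identify the poles. Being transverse defining forms with singularities only along the algebraic invariant set $\Lambda$, the closed forms $\omega_i$ extend, by the extension techniques for holomorphic foliations on projective space, to closed meromorphic one-forms on $\bc{\bf P}^m$; at a generic point of each component of $\Lambda$ the local model computed above is logarithmic, so the extensions have simple poles and remain independent off their polar set. Thus $\fa$ is logarithmic, and by the quoted fact that every logarithmic one-form on $\bc{\bf P}^m$ is of Darboux type we may write $\omega_i=\sum_k\lambda_{ik}\frac{df_k}{f_k}$.

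For the final assertion, suppose $\sing(\fa)\cap\bc^m=\{p\}$ with $H_\infty=\bc{\bf P}^m\setminus\bc^m$ in general position with respect to $\fa$. General position keeps $H_\infty$ non-invariant and prevents affine singularities from being absorbed at infinity, so the affine polar configuration of the $\omega_i$ must be minimal: the trace $\Lambda\cap\bc^m$ is exactly the union of the $m$ coordinate hyperplanes through $p$, which we take as the origin. Then on $\bc^m$ the forms are $\omega_i=\sum_{j=1}^m\lambda_{ij}\frac{dx_j}{x_j}$ with residue matrix $(\lambda_{ij})$ of rank $m-1$, and their common kernel is spanned by the vector $(a_1,\dots,a_m)$ generating the one-dimensional kernel of $(\lambda_{ij})$. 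Hence $\fa\big|_{\bc^m}$ is induced by $X=\sum_{j=1}^m a_jx_j\frac{\partial}{\partial x_j}$, the desired linear diagonal form.
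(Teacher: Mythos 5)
You follow the paper's architecture --- a Darboux--Lie system $\{\omega_1,\dots,\omega_{m-1}\}$ on $\bc{\bf P}^m\setminus\Lambda$, comparison at a linearized nonresonant singularity with an explicit closed logarithmic system $\{\eta_1,\dots,\eta_{m-1}\}$, meromorphic extension with simple poles, and a residue computation at the end --- but the central step is asserted rather than proved. Having written $\omega=A\cdot\eta$ on the complement $U_0$ of the local separatrices, with $A$ an invertible matrix of holomorphic functions, you must show that $A$ is \emph{constant} (equivalently, that the $\omega_i$ are closed and the structure constants vanish). Your justification via ``a translation pseudogroup admitting no nonabelian relations'' and a ``developing map determined by $\fa$'' never connects these notions to $dA$; nothing in that sentence forces $c^i_{jk}=0$. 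The argument that actually works (Lemma~1 of the paper) is computational: wedging the structure equation $d\omega_k=\sum_{i<j}c^k_{ij}\,\omega_i\wedge\omega_j$ with the $\eta$'s annihilates the right-hand side (which is proportional to $\eta_1\wedge\cdots\wedge\eta_{m-1}$) and leaves $dA_{k\ell}\wedge\eta_1\wedge\cdots\wedge\eta_{m-1}=0$, so each entry of $A$ is a holomorphic first integral of $\fa$ on the polydisc minus the coordinate hyperplanes; expanding such a function in a Laurent series, the coefficient of $x_1^{i_1}\cdots x_m^{i_m}$ is multiplied by an integral linear combination of the eigenvalues, and the nonresonance hypothesis ($\bq$-independence) kills every nonconstant term. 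Without this (or an equivalent) argument, the closedness of the $\omega_i$ --- and with it the whole theorem --- remains unproven.

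A second, smaller gap is in the final assertion. You claim that general position ``forces the affine polar configuration to be minimal,'' i.e., that $\Lambda\cap\bc^m$ consists of exactly $m$ hyperplanes through $p$, but this needs proof. The paper establishes: (i) every irreducible component of $\Lambda$ through $p$ contains a local separatrix, so the number $s$ of components satisfies $s\le m$; (ii) no component contains two distinct separatrices, because the residues of the closed forms are constant along an irreducible component of the polar set and such a coincidence would produce a resonance among the eigenvalues of $DX(p)$, whence $s=m$; (iii) the components intersect transversely, so the count $\prod_{j}\deg(f_j)=\sharp[\sing(\fa)\cap\bc^m]=1$ forces each $f_j$ to be linear. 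Your concluding step --- reading the linear diagonal field off the one-dimensional kernel of the residue matrix $(\lambda_{ij})$ --- is a legitimate and somewhat slicker alternative to the paper's factorization $X=H\cdot Z$ with $H$ a nonvanishing polynomial, but it is only available once (i)--(iii) have been carried out.
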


As   spolium of the proof of Theorem~\ref{Theorem:main} we obtain:

\begin{Theorem}
\label{Theorem:spolium} Let $\fa$ be a one-dimensional holomorphic
foliation with singularities on a connected complex manifold
$V^m$, of dimension $m\geq 2$, such that:
\begin{enumerate}
\item There is a codimension one analytic subset $\Lambda\subset
V^m$ such that $\fa\big|_{V\setminus \Lambda}$ has a
$G$-transverse structure.

\item There is a singular point $p \in \sing(\fa)$ which is
linearizable without resonances.

\end{enumerate}

Then:

\begin{itemize}

\item[{\rm(i)}] $\fa\big|_{V\setminus \Lambda}$ is given by $m-1$
closed holomorphic one-forms $\eta_1,...,\eta_{m-1}$.

\item[{\rm(ii)}] Necessarily $p\in \Lambda$ and
$\eta_1,...,\eta_{m-1}$ extend meromorphic with simple poles to a
neighborhood of $p$ and therefore  to each irreducible component
of $\Lambda\setminus\sing(\fa)$ that intersects $V$.

\item[{\rm(iii)}] If each singularity $q\in\sing(\fa)$ is
linearizable without resonance, then $\fa$ is a logarithmic
foliation indeed, each $\eta_j$ extends to a meromorphic one-form
in $V$, with simple poles and $\fa$ is given by the system
$\{\eta_1,...,\eta_{m-1}\}$
\end{itemize}
\end{Theorem}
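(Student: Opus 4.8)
The plan is to manufacture the $m-1$ global one-forms directly from the transverse structure and then to use the nonresonant linearizable singularity as a rigidity seed that forces the transverse model $G$ to be abelian. First I would fix a basis $\omega_1,\dots,\omega_{m-1}$ of left-invariant holomorphic one-forms (a coframe) on $G$, satisfying the Maurer--Cartan equations $d\omega_k=-\frac12\sum_{i,j}c^k_{ij}\,\omega_i\wedge\omega_j$ with structure constants $c^k_{ij}$. Since the transverse charts are submersions into $G$ with transition maps left-translations, and the $\omega_k$ are left-invariant, their pullbacks glue to single-valued holomorphic one-forms $\eta_1,\dots,\eta_{m-1}$ on $V\setminus\Lambda$ that frame the conormal $N^*\fa$ and inherit $d\eta_k=-\frac12\sum_{i,j}c^k_{ij}\,\eta_i\wedge\eta_j$. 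Because a submersion chart renders $\fa$ nonsingular, every point of $\sing(\fa)$ must lie in $\Lambda$; in particular $p\in\Lambda$, which is the first assertion of (ii).

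The decisive step is to show that the $\eta_k$ are closed. I would linearize at $p$ so that $\fa$ is induced by $X=\sum_{j=1}^m a_j x_j\,\partial/\partial x_j$ with the $a_j$ linearly independent over $\mathbb Q$ on a polydisc $U$; nonresonance forces all $a_j\neq0$, so $p$ is isolated in $\sing(\fa)$ and the only invariant hypersurfaces are unions of coordinate hyperplanes. Thus the components of $\Lambda$ through $p$ are $\fa$-invariant coordinate hyperplanes, and $U^{*}:=U\setminus\Lambda$ is a complement of such hyperplanes. On $U^{*}$ the closed logarithmic forms $\alpha_i=a_m\frac{dx_i}{x_i}-a_i\frac{dx_m}{x_m}$, $1\le i\le m-1$, annihilate $X$ and frame $N^*\fa$, so I may write $\eta_k=\sum_i b_{ki}\alpha_i$ with $b_{ki}$ holomorphic on $U^{*}$ and $(b_{ki})$ invertible. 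As $X$ is tangent to the leaves (the fibers of each transverse submersion) one has $\iota_X\eta_k=0$, and since the flow of $X$ preserves those fibers, $L_X\eta_k=0$; the closed forms $\alpha_i$ are likewise $X$-invariant. Hence $0=L_X\eta_k=\sum_i (X\!\cdot\! b_{ki})\,\alpha_i$, so each $b_{ki}$ is a holomorphic first integral of $X$ on $U^{*}$. Expanding $b_{ki}$ in a Laurent series and using that $X\!\cdot\! x^{\alpha}=\bigl(\sum_j a_j\alpha_j\bigr)x^{\alpha}$ vanishes only for $\alpha=0$ by nonresonance, the $b_{ki}$ are constant. Therefore each $\eta_k$ is a constant combination of closed forms, hence closed on $U^{*}$; feeding $d\eta_k=0$ back into the structure equations and using the linear independence of the two-forms $\eta_i\wedge\eta_j$ forces $c^k_{ij}=0$. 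Thus $G$ is abelian and $d\eta_k\equiv0$ on all of $V\setminus\Lambda$, which proves (i).

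With closedness in hand, the representation $\eta_k=\sum_i b_{ki}\alpha_i$ with constant $b_{ki}$ shows immediately that the $\eta_k$ extend meromorphically with simple poles across $\Lambda\cap U$, completing (ii); and since the polar order of a closed meromorphic one-form is locally constant along a smooth irreducible hypersurface, simple poles near $p$ propagate to each irreducible component of $\Lambda\setminus\sing(\fa)$ meeting $U$. For (iii), assuming every $q\in\sing(\fa)$ is nonresonant linearizable, I would run the same local analysis at a singularity on each component of $\Lambda$ to obtain simple poles along that component, and then invoke the Levi--Hartogs extension theorem to push the closed meromorphic forms across $\sing(\fa)$, which has codimension $\ge 2$. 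The resulting $\eta_1,\dots,\eta_{m-1}$ are then closed meromorphic one-forms on all of $V$, with simple poles, framing $N^*\fa$ off their polar set, so $\fa=\{\eta_1=\dots=\eta_{m-1}=0\}$ is logarithmic.

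The main obstacle is the closedness step: everything hinges on showing the change-of-frame coefficients $b_{ki}$ are constant, i.e. that the linearized field $X$ admits no nonconstant Laurent first integral. This is exactly where nonresonance is indispensable, and it is what collapses the structure constants and reduces the transverse model to the additive group. A secondary technical point, needed for (iii), is to guarantee that every irreducible component of $\Lambda$ carries a nonresonant singularity, so that the simple-pole conclusion is available on each component before the extension across $\sing(\fa)$ is performed.
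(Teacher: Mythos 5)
Your proposal is correct and follows essentially the same route as the paper: the paper's Lemma~\ref{Lemma:extension} likewise expresses the Darboux--Lie forms in terms of an explicit closed logarithmic frame $\Theta_j=\sum_k a_k^j\,dx_k/x_k$ annihilating the linearized field, shows the change-of-frame coefficients are first integrals of $X$, and kills them by the same Laurent-series/nonresonance argument, after which closedness, simple poles, and the extension across $\sing(\fa)$ follow as you describe. The only (cosmetic) difference is that you derive $X\cdot b_{ki}=0$ via $L_X\eta_k=0$, whereas the paper gets the same identity by wedging the structure equation $d\Omega_k=\sum c^k_{ij}\Omega_i\wedge\Omega_j$ with the $\Theta$'s.
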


\section{$G$-foliations}
\label{section:DarbouxLie} Let $\fa$ be a codimension $\ell$
foliation on a manifold $V$. Given a Lie group $G$ of dimension
$\ell$ we say that $\fa$ {\it admits a Lie group transverse
structure of model} $G$ or a {\it $G$-transverse structure} for
short, if there are an open cover $V=\bigcup\limits_{j\in J} U_j$
of $V$ such that on each open set we have defined a submersion
$f_j \colon U_j \to G$ such that the leaves of $\fa\big|_{U_j}$
are  levels of $f_j$ and on each nonempty intersection $U_i \cap
U_j\ne \emptyset$ we have $f_i=g_{ij} f_j$ for some locally
constant map $g_{ij} \colon U_i \cap U_j \to G$. In other words:
$\fa$ is defined by the submersions $f_j \colon U_j \to G$ which
on $U_i \cap U_j$ differ by  left translations  $f_i=L_{g_{ij}}
(f_j)$ for some locally constant $g_{ij}\in G$. In this case we
call $\fa$ a {\it $G$-foliation}. The characterization of
$G$-foliations is given by the following result:

\begin{Theorem}[Darboux-Lie, \cite{Godbillon}]
\label{Theorem:Darboux-Lie} Let $\fa$ be a codimension $\ell$
foliation on $V$ and $G$ a Lie group of dimension $\ell$. If $\fa$
admits a $G$-transverse structure then there are one-forms
$\Omega_1,...,\Omega_\ell$ in $V$ such that:
\begin{itemize}
\item[{\rm(a)}]  $\{\Omega_1,...,\Omega_\ell\}$ is a rank $\ell$
integrable system which defines $\fa$.

\item[{\rm (b)}] $d\Omega_k =\sum\limits_{i<j} c_{ij}^k \Omega_i
\wedge \Omega_j$ where $\{c_{ij}^k\}$ are the structure constants
of a Lie algebra basis of $G$.
\end{itemize}

If $\fa, V$ and $G$ are complex {\rm(}holomorphic{\rm)} then the
$\Omega_j$ can be taken holomorphic. Given any basis
$\omega_1,...,\omega_\ell$ of the Lie algebra of $G$ the structure
constants $\{c_{ij}^k\}$ of this basis can be obtained above.

Conversely, given a maximal rank system of one-forms
$\Omega_1,...,\Omega_\ell$ in $V$ such that
$d\Omega_k=\sum\limits_{ij}^k c_{ij}^k\, \, \Omega_i \wedge
\Omega_j$, where the $\{c_{ij}^k\}$ are the structure constants of
the basis $\{\omega_1,...,\omega_\ell\}$ of the Lie algebra of
$G$, then:

\begin{itemize}

\item[{\rm (c)}] For each point $p\in V$ there is a neighborhood
$p\in U_p \subset V$ equipped with a submersion $f_p\colon U_p \to
G$ which defines $\fa$ in $U_p$ such that
$f_p^{\omega_j}=\Omega_j$ in $U_p$, for all $j\in \{1,...,q\}$.

\item[{\rm(d)}] If $V$ is simply-connected we can take $U_p = V$.

\item[{\rm(e)}]  If $U_p \cap U_q \ne \emptyset$ then in the
intersection we have $f_q = L_{g_{pq}}(f_p)$ for some locally
constant left translation $L_{g_{pq}}$ in $G$.

\end{itemize}
In particular, $\fa$ has a $G$-transverse structure.

\end{Theorem}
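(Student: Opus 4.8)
The plan is to prove the two implications separately, using the canonical (Maurer--Cartan) coframe of $G$ as the bridge between a submersion atlas and the structure equations. Recall that on $G$ there is a distinguished coframe $\omega_1,\dots,\omega_\ell$ of left-invariant one-forms, dual to a basis $X_1,\dots,X_\ell$ of the Lie algebra, and that these satisfy the Maurer--Cartan equations $d\omega_k=\sum_{i<j}c_{ij}^k\,\omega_i\wedge\omega_j$, where the $c_{ij}^k$ are (up to the usual sign convention) the structure constants of $\{X_i,X_j\}$. These forms being left invariant, $L_a^{*}\omega_k=\omega_k$ for every $a\in G$, which is the single property driving the whole argument.

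For the direct implication, suppose $\fa$ carries a $G$-transverse structure given by submersions $f_j\colon U_j\to G$ with $f_i=L_{g_{ij}}\circ f_j$ on overlaps. I would set $\Omega_k\big|_{U_j}:=f_j^{*}\omega_k$. Left invariance gives $f_i^{*}\omega_k=f_j^{*}L_{g_{ij}}^{*}\omega_k=f_j^{*}\omega_k$ on $U_i\cap U_j$, so the local forms agree on overlaps and glue to global one-forms $\Omega_1,\dots,\Omega_\ell$ on $V$. Since each $f_j$ is a submersion and the $\omega_k$ form a coframe on $G$, the system $\{\Omega_k\}$ has rank $\ell$ and its common kernel is exactly the tangent distribution to the fibers of $f_j$, i.e. $T\fa$; this is (a). Then (b) follows by pulling back the Maurer--Cartan equations, $d\Omega_k=f_j^{*}d\omega_k=\sum_{i<j}c_{ij}^k\,\Omega_i\wedge\Omega_j$. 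The assertion that any prescribed Lie-algebra basis may be realized is immediate, since any left-invariant coframe yields the structure constants of its own dual basis; in the complex case all of this is holomorphic, as the Maurer--Cartan forms of a complex Lie group are holomorphic.

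For the converse I would use the classical developing-map construction. Given a rank-$\ell$ system $\Omega_1,\dots,\Omega_\ell$ on $V$ satisfying (b), form on $V\times G$ the one-forms $\Theta_k:=\pi_V^{*}\Omega_k-\pi_G^{*}\omega_k$ and set $\D:=\bigcap_k\ker\Theta_k$. Because $(\omega_1,\dots,\omega_\ell)$ is a coframe on $G$, the $\pi_G^{*}\omega_k$ pair nondegenerately with the $G$-directions, so $\D$ is a rank-$(\dim V)$ distribution projecting isomorphically onto $TV$. The key (and essentially formal) computation is that $\D$ is integrable: writing $d\Theta_k=\sum_{i<j}c_{ij}^k\big(\pi_V^{*}\Omega_i\wedge\pi_V^{*}\Omega_j-\pi_G^{*}\omega_i\wedge\pi_G^{*}\omega_j\big)$ and restricting to $\D$, each quadratic term vanishes since $\pi_V^{*}\Omega_k=\pi_G^{*}\omega_k$ holds along $\D$. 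By Frobenius, through any point $(p,g)$ there is an integral leaf on which $\pi_V$ is a local diffeomorphism; inverting $\pi_V$ there and composing with $\pi_G$ produces $f_p\colon U_p\to G$ with $f_p^{*}\omega_k=\Omega_k$ for all $k$, hence a submersion defining $\fa$ near $p$. This gives (c).

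The global statements are where the real work lies, and I expect them to be the main obstacle. For (d) I would show that $\pi_V$ restricted to the chosen integral leaf is not merely a local diffeomorphism but a covering map: the free action of $G$ on the second factor of $V\times G$ by left translations preserves $\D$ (again by left invariance) and acts simply transitively on each fibre $\{p\}\times G$, exhibiting $V\times G\to V$ as a flat principal $G$-bundle whose $\D$-leaves project to $V$ as covering maps; simple connectedness of $V$ then forces a single leaf to project diffeomorphically, so $f_p$ extends to all of $V$. For (e), the comparison of two developing submersions reduces to Cartan's uniqueness theorem for the Maurer--Cartan form: if $f_p^{*}\omega_k=f_q^{*}\omega_k$ for all $k$ on a connected overlap, then $f_q=L_{g_{pq}}\circ f_p$ for some $g_{pq}\in G$, locally constant; assembling the $f_p$ then produces the $G$-transverse structure. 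The genuinely delicate point is precisely this passage from the local to the global developing map — controlling the monodromy of $f_p$ over $\pi_1(V)$ and verifying the covering property — rather than the integrability identity, which is purely algebraic.
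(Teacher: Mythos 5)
The paper does not prove this theorem: it is quoted as a classical result with a citation to Godbillon, so there is no internal proof to compare against. Your argument is correct and is precisely the standard one from that reference --- pulling back the Maurer--Cartan coframe for the direct implication, and for the converse the development via the graph distribution $\bigcap_k\ker\bigl(\pi_V^{*}\Omega_k-\pi_G^{*}\omega_k\bigr)$ on $V\times G$, with the $G$-action on the second factor giving the covering property and Cartan's uniqueness giving the left-translation cocycle.
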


\section{Examples}

The most trivial example of a $G$-foliation is given by the
product foliation on a manifold $V=G\times N$ product of a Lie
group $G$ by a manifold $N$. The leaves of the foliation are of
the form $\{g\}\times N$ where $g\in G$. Other basic examples of
$G$-foliations are: \vglue.1in \noindent{\bf 1}.  Let $H$ be a
closed (normal) subgroup of a Lie group $G$. We consider the
action $\Phi\colon H\times G\rightarrow G$ given by
$\Phi(h,g)=h.g$ and the quotient map $\pi\colon G\rightarrow G/H$
(a fibration) which defines a foliation $\fa$ on $G$. Given
$x\in{\fa}_{g}=\pi^{-1}(Hg)$ we have $\pi(x)=Hg$ and
$\Phi_{h}(x)=h.x$. But $\pi(\Phi_{h}(x))=\pi(h.x)=H.hx=Hx$ implies
that $\Phi_{h}(x)\in \pi^{-1}(Hx)={\fa}_{x}$ and the orbit
$\O(g)=Hg$ is transverse to the fiber $\pi^{-1}(Hg)$. Hence, $\fa$
is a foliation invariant under the transverse action $\Phi$. Now
let $G$ be a simply-connected group, $H$ a discrete subgroup of
$G$ and $F\colon H\rightarrow \Diff(G)$ the natural representation
given by $F(h)=L_{h}$. The universal covering of $G/H$ is $G$ with
projection $\pi\colon G\rightarrow G/H$ and we have
$\pi_{1}(G/H)\simeq H$ because $\pi \circ f(g)=Hf(g)=Hg$ for $f\in
Aut(G)$, so $f(g)\simeq g$ implies that $f(g).g^{-1}\in H$ and
$f(g)=h.g$, for some unique $h\in H$. Therefore $f=L_{h}$ and then
we define $Aut(G)\rightarrow H;\ f\mapsto h$, which is an
isomorphism. So, we may write $F\colon \pi_{1}(G/H)\rightarrow
\Diff(G)$ and $\Phi\colon \pi_{1}(G/H)\times G\rightarrow G$. The
map $\Psi\colon H\times G\times G\rightarrow G\times G$ given by
$\Psi(h,g_{1},g_{2})=(L_{h}(g_{1}),L_{h}(g_{2}))$ is a properly
discontinuous action and defines a quotient manifold $V={G\times
G\over \Psi}$, which equivalence classes are the
orbits of $\Psi$. We have the following facts:\\
1) There exists a fibration $\sigma\colon V\rightarrow G/H$ with
fiber $G$, induced by $\pi\colon G\rightarrow G/H$, and structural
group isomorphic to $F(H)<\Diff(G)$.\\
2) The natural foliation $\fa$ on $G$ given by classes $Hg;\ g\in
G,$ is $\Phi$-invariant, such that the product foliation $G\times
\fa$ on $G\times G$ is $\Psi$-invariant and induces a foliation
${\fa}_{0}$ on $V$, called suspension of $\fa$ for $\Phi$,
transverse to $\sigma\colon V\rightarrow G/H$.

\vglue.1in \noindent{\bf 2}.  Let $G={\mathbb P}SL(2,{\mathbb C})$
and $H={\Aff}({\mathbb C})\triangleleft G$. An element of $G$ has
the expression $x\mapsto {ax+b\over cx+d}={a+{b\over x}\over
c+{d\over x}}$. The group $H$ is the isotropy group of $\infty$
has its maps given by $x\mapsto {ax+b\over d}\simeq
\left(\begin{array}{cc}
  a & b \\
  0 & {1\over a} \\
\end{array}\right)$. Since $H\triangleleft G,\ G$ has dimension 3 and $H$ has
dimension 2, we conclude that $G/H$ has dimension one. Thus we
have a fibration ${\mathbb P}SL(2,{\mathbb C})\rightarrow {\mathbb
C}{\bf P}^1\simeq S^{2}$ which is invariant under an action of
${\Aff}({\mathbb C})$ on ${\mathbb P}SL(2,{\mathbb C})$ having
leaves diffeomorphic to ${\mathbb C}^{*}\times {\mathbb C}$.

The  examples above are $G$-foliations which are also invariant
under a Lie group transverse action in the sense of
Definition~\ref{Definition:lieinvariant},
Section~\ref{section:lieinvariant}. As for the singular case, we
observe that a one-dimensional logarithmic foliation on a manifold
$V^m$ as defined in the introduction, is a $G$-foliation where $G$
is the additive Lie group in $\bc^{m-1}$, as granted by
Theorem~\ref{Theorem:Darboux-Lie},
Section~\ref{section:DarbouxLie}.

\section{$G$-Foliations with singularities}

Let now $\fa$ be a one-dimension holomorphic foliation with
isolated singularities on $V$. We shall say that $\fa$ admits a
$G$-transverse structure if the corresponding nonsingular
foliation $\fa_0:=\fa\big|_{V \setminus \sing(\fa)}$ admits a
$G$-transverse structure. Let $p\in \sing(\fa)$ be an isolated
singularity. Take a small neighborhood $p \in \Delta_p\subset V$
of $p$ such that $\Delta_p\cap \sing(\fa) =\{p\}$ and $\Delta_p$
is biholomorphic to a ball in $\bc^m$. Then $\Delta_p ^*=\Delta_p
\setminus \{p\}$ is simply-connected and by the classical theory
of transversely Lie foliations (cf. Darboux-Lie
Theorem~\ref{Theorem:Darboux-Lie}) the restriction
$\fa\big|_{\Delta_p ^*}$ is given by a holomorphic submersion $P
\colon \Delta _p^* \to G$. By Hartogs' extension theorem
(\cite{Gunning}, \cite{Gunning-Rossi}) the map extends to a
holomorphic map $P \colon \Delta _p \to G$, which is a {\it first
integral} for $\fa$ in $\Delta_p$. Assume now that $\fa$ admits a
$G$-transverse structure on $V\setminus \Lambda$ where
$\Lambda\cap \sing(\fa)=\emptyset$ and $\Lambda\subset V$ is a
codimension $\geq 2$ analytic subset. By classical Hartogs'
extension theorem the one-forms $\Omega_1,...,\Omega_\ell$
obtained in Theorem~\ref{Theorem:Darboux-Lie} extend
holomorphically to $V$ and then also by
Theorem~\ref{Theorem:Darboux-Lie} the $G$-transverse structure
extends to $V$. Hence we shall consider the case where $\fa$
admits a $G$-transverse structure on $V\setminus\Lambda$ and
$\Lambda\subset V$ is analytic of codimension one. Also we shall
assume that $\Lambda$ is invariant by $\fa$.

\begin{Lemma}
\label{Lemma:extension} Let $X=\sum\limits_{j=1}^m \lambda _j x_j
\frac{\partial}{\partial x_j}$ in a neighborhood $U$ of $0\in
\bc^m$, with $\{\lambda_1,...,\lambda_m\}$ linearly independent
over $\mathbb Q$. Denote by $\fa_X$ the one-dimensional foliation
induced by $X$. Suppose there are holomorphic one-forms
$\Omega_1,...,\Omega_{m-1}$ defined in $U_0:=U\setminus
[\bigcup\limits_{j=1}^{m}(x_j=0)]$ such that:

\begin{itemize}

\item[{\rm i.}] $d\Omega_k=\sum\limits_{i<j} c_{ij}^k \Omega_i
\wedge \Omega_j$ where $\{c_{ij}^k\}$ are the structure constants
of a Lie algebra of a Lie group $G$.

\item[{\rm ii.}] The foliation $\fa_X$ induced by $X$ is given in
$U_0$ by the integrable system $\{\Omega_1,...,\Omega_{m-1}\}$ of
maximal rank.

\end{itemize}
Then

\begin{enumerate}

\item $d\Omega_j=0, \forall j$.

\item $\Omega_j$ extends meromorphic to $U$ as a closed one-form
with simple poles.

\end{enumerate}

\end{Lemma}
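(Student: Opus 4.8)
The plan is to show that each $\Omega_k$ is invariant under the flow of $X$, and then to exploit the $\mathbb{Q}$-independence of the eigenvalues to pin down its coefficients completely, which delivers both conclusions at once.

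First I would record that, since $\{\Omega_1,\dots,\Omega_{m-1}\}$ has maximal rank and defines $\fa_X$ on $U_0$, the common kernel of the $\Omega_k$ is the tangent line $\langle X\rangle$ of the foliation; in particular $\Omega_k(X)=0$ for every $k$ (note that $\mathbb{Q}$-independence forces each $\lambda_j\neq 0$, hence $X\neq 0$ throughout $U_0$, so $\fa_X$ is regular there). Combining this with hypothesis (i) and Cartan's formula yields the first key step: writing $i_X$ for the interior product,
\[
\mathcal{L}_X\Omega_k = i_X d\Omega_k + d\big(i_X\Omega_k\big) = \sum_{i<j} c_{ij}^k\big(\Omega_i(X)\,\Omega_j-\Omega_j(X)\,\Omega_i\big) + d\big(\Omega_k(X)\big) = 0 .
\]
Thus hypothesis (i) is used solely to guarantee that each $\Omega_k$ is invariant under the flow $\varphi_t(x)=(e^{\lambda_1 t}x_1,\dots,e^{\lambda_m t}x_m)$ of $X$, that is $\varphi_t^{*}\Omega_k=\Omega_k$ for small $t$.

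Next I would convert this invariance into rigidity of the coefficients. Writing $\Omega_k=\sum_{j=1}^m A_{kj}\,dx_j$ with $A_{kj}$ holomorphic on $U_0$, the identity $\varphi_t^*\Omega_k=\Omega_k$ reads $A_{kj}(\varphi_t(x))=e^{-\lambda_j t}A_{kj}(x)$ for each $j$. Equivalently, $B_{kj}:=x_j A_{kj}$ satisfies $B_{kj}\circ\varphi_t=B_{kj}$, i.e. $B_{kj}$ is a holomorphic first integral of $X$ on $U_0$. Here nonresonance enters decisively: after shrinking $U$ to a ball (a Reinhardt domain), $B_{kj}$ has a Laurent expansion $\sum_{\nu\in\mathbb{Z}^m} c_\nu x^\nu$ on $U_0$, and from $X(x^\nu)=\big(\sum_j\nu_j\lambda_j\big)x^\nu$ together with uniqueness of Laurent coefficients we get $c_\nu\big(\sum_j\nu_j\lambda_j\big)=0$; since $\{\lambda_1,\dots,\lambda_m\}$ is $\mathbb{Q}$-independent, $\sum_j\nu_j\lambda_j=0$ with $\nu\in\mathbb{Z}^m$ forces $\nu=0$, so $B_{kj}$ is a constant $c_{kj}$. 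Hence $A_{kj}=c_{kj}/x_j$ and
\[
\Omega_k=\sum_{j=1}^m c_{kj}\,\frac{dx_j}{x_j}, \qquad c_{kj}\in\mathbb{C}.
\]

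The two assertions then follow immediately: $d\Omega_k=\sum_j c_{kj}\,d(dx_j/x_j)=0$, giving (1); and the displayed expression exhibits $\Omega_k$ as a closed meromorphic one-form on all of $U$ with at worst simple poles along the hyperplanes $\{x_j=0\}$, giving (2). (The relation $\Omega_k(X)=0$ moreover reads $\sum_j c_{kj}\lambda_j=0$, so the coefficient vectors lie in the hyperplane orthogonal to $\lambda$, consistent with the rank being $m-1$.) I expect the only genuinely delicate point to be the justification of the Laurent expansion on $U_0$: one must use that a ball centered at the origin is a Reinhardt domain, so that a function holomorphic on the complement of the coordinate hyperplanes expands as a normally convergent Laurent series, legitimizing the term-by-term application of $X$ and the coefficient matching. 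Everything else is the formal computation above.
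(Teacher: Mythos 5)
Your proof is correct, and it reaches the conclusion by a genuinely different route from the paper's. The paper first builds an explicit basis $\Theta_1,\dots,\Theta_{m-1}$ of closed logarithmic forms $\sum_k a^j_k\,dx_k/x_k$ annihilating $X$, writes each $\Omega_i$ as a holomorphic-coefficient combination of the $\Theta_j$ on $U_0$, and uses the structure equation (i) to derive $da\wedge\Theta_1\wedge\dots\wedge\Theta_{m-1}=0$ for each coefficient $a$, i.e.\ the coefficients are first integrals of $X$; you instead obtain $\mathcal{L}_X\Omega_k=i_X d\Omega_k+d(\Omega_k(X))=0$ straight from (i) and $\Omega_k(X)=0$ via Cartan's formula, and deduce that the functions $x_jA_{kj}$ are first integrals. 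From that point on the two arguments coincide: a holomorphic first integral of a nonresonant diagonal linear vector field on a punctured Reinhardt neighbourhood is constant, by matching Laurent coefficients against the relation $\sum_j\nu_j\lambda_j=0$ with $\nu\in\mathbb{Z}^m$. What your version buys is uniformity in $m$ (the paper treats $m=3$ ``for simplicity'') and a shorter path to the explicit expression $\Omega_k=\sum_j c_{kj}\,dx_j/x_j$, bypassing the need to justify that the $\Omega_i$ are holomorphic combinations of the $\Theta_j$ with nowhere-vanishing determinant. What the paper's decomposition buys is bookkeeping that is reused later: the identification of the coordinate hyperplanes as the separatrices and the linear relation between the eigenvalues and the residues, which are quoted in Lemma~\ref{Lemma:transverseintersection} and in the argument that $s=m$. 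The delicate point you flag --- the normally convergent Laurent expansion on a Reinhardt domain legitimizing term-by-term application of $X$ --- is exactly the step the paper also relies on, tacitly; shrinking $U$ to a polydisc and invoking the identity theorem on the connected set $U_0$ disposes of it.
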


\begin{proof}
For simplicity we assume $m=3$. Let $\Theta_1,\Theta_2$ be given
by $\Theta_j=\sum\limits_{j=1}^3 a_k^j \frac{dx_k}{x_k}, \, a_k ^j
\in \bc.$ Then $\Theta_j(X)=\sum\limits_{k=1}^3 a_{k}^j
\lambda_k$. Thus we can choose $\Theta_1, \Theta_2$ such that
these are linearly independent in the complement of
$\bigcup\limits_{j=1}^3 (x_j=0)$ and $\Theta_j(X)=0, j=1,2$.
Therefore $\fa_X$ is defined by the integrable system $\{\Theta_1,
\Theta_2\}$ in $U$. By this and (ii) we can write in $U_0$

\[
\Omega_1=a_1 \Theta_1 + a_2 \Theta_2, \, \, \Omega_2= b_1 \Theta_1
+ b_2 \Theta_2
\]
for some holomorphic functions $a_1, a_2, b_1, b_2$ in $U_0$ with
the property that $a_1 b_2 - b_1 a_2$ has no zero in $U_0$. Now,
since $\Theta_j$ is closed we have
\[
d\Omega_1= da_1 \wedge \Theta_1 + da_2 \wedge \Theta_2, \,
d\Omega_2 = db_1 \wedge \Theta_1 + db_2 \wedge \Theta_2
\]
Thus $d\Omega_1 \wedge \Theta_1 = (da_1 \wedge \Theta_1  + da_2
\wedge \Theta_2) \wedge \Theta_1 = da_2 \wedge \Theta_2 \wedge
\Theta_1$. From $d\Omega_1 = c_{12}^1 \Theta_1 \wedge \Theta_2$ we
obtain $d\Omega_1 = c_{12}^1 (a_1 b_2 - a_2 b_1) \Theta_1 \wedge
\Theta_2 \Theta _1=0$. Thus  we obtain
\[
da_2 \wedge \Theta_1 \wedge \Theta_2=0
\]

We have

\[
\Theta_1 \wedge \Theta_2 = (a_1 ^1 a_2 ^2 - a_1 ^2 a_ 2 ^1 )
\frac{dx_1 \wedge dx_2}{x_1 x_2} + (a_1 ^1 a_ 3 ^2 - a_1 ^2 a_ 3
^1) \frac{dx_1 \wedge dx_3}{x_1 x_3} + (a_ 2 ^1 a_ 3 ^2 - a_ 3 ^1
a _ 2 ^2) \frac{dx_2\wedge dx_3 }{x_2 x_3}
\]

Write $\Theta_1 \wedge \Theta_2 = \alpha_{12} \frac{dx_1 \wedge
dx_2}{x_1 x_2} + \alpha_{13}\frac{dx_1 \wedge dx_3}{x_1 x_3} +
\alpha_{23}\frac{dx_2 \wedge dx_3}{x_2 x_3}$. For a holomorphic
function $f(x_1,x_2,x_3)$ in $U_0$ we have
\[
df\wedge \Theta _ 1 \wedge \Theta_2 =\bigg[\frac{f_{x_1}
\alpha_{23}}{x_2 x_3} - \frac{f_{x_2} \alpha_{13}}{x_1 x_3} +
\frac{f_{x_3} \alpha_{12}}{x_1 x_2}\bigg] dx_1 \wedge dx_2 \wedge
dx_3
\]

Therefore $df \wedge \Theta_1 \wedge \Theta_2=0 \Leftrightarrow
\alpha_{23} x_1 f_{x_1} - \alpha_{13} x_2 f_{x_2} + \alpha_{12}
x_3 f_{x_3}=0$.

Now, if we write in Laurent series $f=\sum\limits_{i,j,k \in
\mathbb Z} f_{ijk} x_1 ^i x_2 ^j x_3 ^k$ then the last equation is
equivalent to
\[
\sum\limits_{i,j,k\in \mathbb Z} (i \alpha_{23} - j \alpha_{13} +
k \alpha_{12})f_{ijk} x_1 ^i x_2 ^j x_ 3 ^k =0
\]
which is equivalent to
\[
(i \alpha_{23} - j \alpha_{13} + k \alpha_{12})f_{ijk}=0, \, i,j,k
\in \mathbb Z
\]

Recall that

$\alpha_{12}=a_1 ^1 a_2 ^2 - a_1 ^2 a_ 2 ^1, \, \alpha_{13}=a_1 ^1
a_ 3 ^2 - a_1 ^2 a_ 3 ^1, \, \alpha_{23}=a_ 2 ^1 a_ 3 ^2 - a_ 3 ^1
a _ 2 ^2$ and that we have $a_1 ^1 \lambda_1 + a_ 2 ^1 \lambda_2 +
a_ 3 ^1 \lambda _ 3 =0, \, a_1 ^2 \lambda _1 + a_ 2 ^2 \lambda _2
+ a_ 3 ^2 \lambda _ 3 =0$, so that the complex vectors $\vec
\alpha:=(\alpha_{12}, \alpha_{13}, \alpha_{23})$ and $\vec
\lambda:=(\lambda_1, \lambda_2, \lambda_3)$ are collinear, i.e.,
there is $t\in\bc^*$ such that $ \alpha_{12}=t \lambda_1, \,
\alpha_{13}=t \lambda_2, \, \alpha_{23}=t \lambda_3$. Thus we have
$df\wedge \Theta_1 \wedge \Theta_2 = 0 \Leftrightarrow (i
\lambda_1 - j \lambda_2 + k \lambda_3)=0, \forall i,j,k\in \mathbb
Z$.

By the nonresonance hypothesis, the only solution to the last
equation is the trivial solution, therefore such function $f$ must
be constant. This implies  that $a_2$ is constant. Similarly $a_1,
b_1$ and $b_2$ are constant in $U_0$ and  we have
\[
\begin{pmatrix}
\Omega_1 \\ \Omega_2 \end{pmatrix} =C . \begin{pmatrix} \Theta_ 1
\\ \Theta_2 \end{pmatrix}
\]
for some nonsingular $2 \times 2$ complex matrix $C$. This proves
(1) and (2) in the lemma.
\end{proof}

\begin{Lemma}
\label{Lemma:simplepoles} Let $\fa$ be a one-dimensional
holomorphic foliation with singularities on $V^m$. Assume that:
\begin{enumerate}
\item The singularities of $\fa$ are linearizable without
resonances.

\item There is an invariant codimension one analytic subset
 $\Lambda \subset V^m$  such that $\fa$ has a
$G$-transverse structure in $V^m \setminus \Lambda$.
\end{enumerate}

Then $\fa$ is a logarithmic foliation on $V^m$.

\end{Lemma}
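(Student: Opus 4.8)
The plan is to manufacture the defining one-forms from the transverse structure, force them to be closed by reducing to the local model at a singularity, and then extend them meromorphically with simple poles across $\Lambda$. First I would apply the Darboux--Lie Theorem~\ref{Theorem:Darboux-Lie} to the nonsingular foliation $\fa\big|_{V\setminus(\Lambda\cup\sing(\fa))}$: since $\fa$ has codimension $m-1$ and carries a $G$-transverse structure on $V\setminus\Lambda$, there are holomorphic one-forms $\Omega_1,\dots,\Omega_{m-1}$ on $V\setminus\Lambda$ forming a maximal rank integrable system that defines $\fa$ and satisfying $d\Omega_k=\sum_{i<j}c_{ij}^k\,\Omega_i\wedge\Omega_j$, where the $c_{ij}^k$ are the (global, constant) structure constants of the Lie algebra of $G$. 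To conclude that $\fa$ is logarithmic it then suffices to show that (a) each $\Omega_j$ is closed, and (b) each $\Omega_j$ extends to a meromorphic one-form on $V$ with simple poles; the required linear independence off the poles is inherited from the maximal rank of the system on $V\setminus\Lambda$.

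Next I would localise at a singularity, first checking that every $p\in\sing(\fa)$ lies on $\Lambda$. Indeed, were some $p\notin\Lambda$, one could shrink a linearising ball $\Delta_p$ so that $\Delta_p\cap\Lambda=\varnothing$; then $\Delta_p^{*}=\Delta_p\setminus\{p\}$ is simply connected (as $m\ge 2$) and carries the $G$-structure, so by Darboux--Lie $\fa\big|_{\Delta_p^{*}}$ admits a submersive first integral $P\colon\Delta_p^{*}\to G$, which extends holomorphically across $p$ by Hartogs (\cite{Gunning}). Its components would be nonconstant holomorphic first integrals of $X=\sum_j\lambda_j x_j\,\partial/\partial x_j$, impossible since nonresonance ($\lambda_j$ independent over $\bq$) forces every holomorphic first integral of $X$ to be constant. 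Hence $p\in\Lambda$. Linearising at $p$, the only germs of invariant hypersurfaces of a nonresonant diagonal $X$ are unions of the coordinate hyperplanes $\{x_j=0\}$, so $\Lambda\cap U\subseteq\bigcup_j\{x_j=0\}$ near $p$ and therefore $U_0=U\setminus\bigcup_j\{x_j=0\}\subseteq U\setminus\Lambda$. On $U_0$ the forms $\Omega_j$ meet hypotheses (i)--(ii) of Lemma~\ref{Lemma:extension}, which yields at once that each $\Omega_j$ is closed on $U_0$ and extends meromorphically to $U$ with simple poles.

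I would then globalise closedness. The constants $c_{ij}^k$ are the same throughout $V\setminus\Lambda$, whereas on the set $U_0$ just treated we have $0=d\Omega_k=\sum_{i<j}c_{ij}^k\,\Omega_i\wedge\Omega_j$. Because $\{\Omega_1,\dots,\Omega_{m-1}\}$ has maximal rank, at a generic point of $U_0$ the two-forms $\Omega_i\wedge\Omega_j$ ($i<j$) are linearly independent (vacuously if $m=2$, where $d\Omega_1=0$ is immediate), so $c_{ij}^k=0$ for all $i,j,k$. Thus $d\Omega_k\equiv 0$ on the connected manifold $V\setminus\Lambda$, establishing (a); equivalently, the transverse model is the additive group $\bc^{m-1}$.

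Finally, for (b) I would extend the closed forms across all of $\Lambda$. The residue of $\Omega_j$ around an irreducible component $\Lambda_\alpha$ is locally constant, hence a single constant $\lambda_{j\alpha}$ along the connected smooth locus of $\Lambda_\alpha$ off $\sing(\fa)$, and it agrees with the value read from the simple-pole model near any singularity lying on $\Lambda_\alpha$. Subtracting $\sum_\alpha\lambda_{j\alpha}\,df_\alpha/f_\alpha$ for local reduced equations $f_\alpha$, the remainder is closed with trivial monodromy around $\Lambda$ and holomorphic across $\Lambda_\alpha$ near each singularity; propagating this behaviour along the connected smooth locus and filling the removed codimension $\ge 2$ set by the Levi--Hartogs extension theorem (\cite{Gunning-Rossi}) produces the meromorphic extension with simple poles. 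The main obstacle is precisely this last propagation across the smooth points of $\Lambda$ that are regular for $\fa$: a closed holomorphic form defining the foliation can a priori acquire an essential singularity along an invariant hypersurface (as $e^{1/z}\,dz$ shows), so meromorphy is not automatic. The leverage is that the linearizable--nonresonant hypothesis holds at \emph{every} singularity, which pins the logarithmic local model and the residue on each component meeting $\sing(\fa)$; invariance of $\Lambda$ together with constancy of the residue is then what carries this model along the whole component and excludes the essential-singularity scenario, and ensuring that every relevant component of $\Lambda$ is reached by such seed data is exactly why the hypothesis is imposed on all of $\sing(\fa)$ rather than a single point.
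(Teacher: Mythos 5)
Your proposal is correct and follows essentially the same route as the paper: invoke the Darboux--Lie Theorem~\ref{Theorem:Darboux-Lie} to produce the system $\{\Omega_1,\dots,\Omega_{m-1}\}$ on $V\setminus\Lambda$, then apply Lemma~\ref{Lemma:extension} at the linearizable nonresonant singularities to obtain closedness and the meromorphic extension with simple poles. The extra steps you supply --- that every singularity must lie on $\Lambda$, that $c_{ij}^k=0$ forces global closedness, and the residue/propagation argument carrying the simple-pole model along each component of $\Lambda$ --- are exactly the details the paper compresses into its one-paragraph proof (and partially defers to Theorem~\ref{Theorem:spolium}(ii)), so they strengthen rather than alter the argument.
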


\begin{proof}
According to Darboux-Lie Theorem~\ref{Theorem:Darboux-Lie} there
is a system of holomorphic one-forms $\Omega_1,...,\Omega_{m-1}$
defined in $V_0:=V\setminus \Lambda$ such that:

\begin{itemize}

\item[{\rm i.}] $d\Omega_k=\sum\limits_{i<j} c_{ij}^k \Omega_i
\wedge \Omega_j$ where $\{c_{ij}^k\}$ are the structure constants
of a Lie algebra of the Lie group $G$.

\item[{\rm ii.}] The foliation $\fa$  is given in $V_0$ by the
integrable system $\{\Omega_1,...,\Omega_{m-1}\}$ of maximal rank.

\end{itemize}
Applying now Lemma~\ref{Lemma:extension} and denoting by $\eta_j$
the meromorphic extension of $\Omega_j$ to $V^m$, we conclude that
$\fa$ is given by a system of closed meromorphic one-forms
$\{\eta_1,...,\eta_{m-1}\}$ on $V^m$, such that each $\eta_j$ is
holomorphic on $V^m \setminus \Lambda$ and has simple poles
(contained on $\Lambda$). Therefore $\fa$ is a logarithmic
foliation.
\end{proof}

\begin{Remark}
{\rm We can  prove  a more general version of the above lemma as
follows: {\em Let $\fa$ be a one-dimensional foliation on $\bc
{\bf P}^m$.  Suppose that $\fa$ is given by a system of $m-1$
meromorphic  one-forms, $\eta_j$, such that each $\eta_j$ is
closed. Assume that each singularity
 of $\fa$ is linearizable without resonances. Then $\fa$ is
 logarithmic.}
\begin{proof}
Let $\Lambda \subset \bc {\bf P}^m$ be an irreducible component of
the polar set $(\eta_j)_\infty$ of $\eta_j$. We claim that
$\Lambda \cap \sing(\fa_X) \ne \emptyset$. Indeed, since $\Lambda$
is an irreducible component of $(\eta_j)_\infty$  and $\eta_j$ is
closed it follows that $\Lambda$ is invariant by the codimension
one foliation induced by $\eta_j$ and therefore $\Lambda$ is
invariant by $\fa$. The Index Theorem in \cite{Soares} and its
natural generalizations in \cite{Suwa}, then imply that
necessarily $\Lambda \cap \sing(\fa)=\emptyset$. Given any
singular point $p \in \sing(\fa)\cap \Lambda$ by hypothesis $\fa$
is linearizable without resonance at $p$ and by the proof of
Lemma~\ref{Lemma:extension} we conclude that $\eta_j$ has simple
poles in a neighborhood of $p$. This implies that $\Lambda$ has
order one in $(\eta_j)_\infty$.\end{proof}

} \end{Remark}

 From now on we assume that $V^m =\bc {\bf P}^m$ and
$\fa$ is a one-dimensional foliation as in
Theorem~\ref{Theorem:main}. Choose an affine space $\bc^m \subset
\bc {\bf P}^m$ such that the (projective) hyperplane
$E_\infty:=\bc {\bf P}^m\setminus \bc ^m$ is {\it in general
position with respect to $\fa$}, what means the following:

\begin{itemize}
\item $E_\infty\cap \sing(\fa)=\emptyset$.

\item $E_\infty$ is transverse to $\fa$ except for a finite number
of tangency points.

\item $E_\infty$ meets each irreducible component of $\Lambda$
transversely and at non-tangency points of $\fa$.

\end{itemize}

Given affine coordinates $(z_1,...,z_m)\in \bc^m = \bc {\bf P}^m
\setminus E_\infty$, choose irreducible polynomials
$f_1,...,f_{s}\in \bc[z_1,...,z_m]$ such that
$\Lambda=\bigcup\limits_{j=1}^s (f_j=0)$. Then, since each
$\eta_j$ is meromorphic on $\bc {\bf P}^m$, it is a rational
one-form. Moreover, $\eta_j$ is holomorphic on $\bc {\bf
P}^m\setminus \Lambda$ and has only simple poles. The following
result is well-known as the {\em Integration Lemma} (cf.
\cite{Scardua1} for instance):

\begin{Lemma}
\label{Lemma:integrationlemma} We can write $\eta_j\big|_{\bc ^m}
=\sum\limits_{k=1}^s a^j _k \frac{df_k}{f_k}$ for some complex
numbers $a^j _k \in \bc$.
\end{Lemma}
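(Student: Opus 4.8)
The statement is the Integration Lemma: each closed rational one-form $\eta_j$ on $\bc\mathbf{P}^m$ with simple poles along $\Lambda=\bigcup_{k=1}^s(f_k=0)$ can be written, in the affine chart $\bc^m$, as $\eta_j|_{\bc^m}=\sum_{k=1}^s a^j_k \frac{df_k}{f_k}$ with constants $a^j_k\in\bc$.

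The plan is to exploit the two structural facts we are handed: $\eta_j$ is closed, and it has only simple poles along the irreducible hypersurfaces $(f_k=0)$. First I would compute the residue of $\eta_j$ along each irreducible component $(f_k=0)$ of $\Lambda$. Because the pole is simple, the residue $\operatorname{Res}_{(f_k=0)}\eta_j$ is a well-defined holomorphic (in fact rational) object on the smooth locus of $(f_k=0)$; the fact that $\eta_j$ is closed forces this residue to be a closed form on the hypersurface, and since the pole order is exactly one, the residue is a \emph{constant}, which I would call $a^j_k$. Concretely, near a smooth point of $(f_k=0)$ one can choose local coordinates in which $f_k$ is a coordinate function and write $\eta_j = a^j_k \frac{df_k}{f_k}+(\text{holomorphic})$; closedness of $\eta_j$ together with $d(df_k/f_k)=0$ shows $a^j_k$ is locally constant, and irreducibility of $(f_k=0)$ makes it globally constant along that component.

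Next I would form the auxiliary one-form
\[
\beta_j := \eta_j\big|_{\bc^m}-\sum_{k=1}^s a^j_k\,\frac{df_k}{f_k}.
\]
By construction $\beta_j$ is closed (a difference of closed forms) and, having had all its simple-pole residues subtracted off, it is a rational one-form on $\bc\mathbf{P}^m$ with \emph{no poles along any component of $\Lambda$}; since $\Lambda$ was the entire polar set of $\eta_j$ and the subtracted logarithmic terms have poles only on $\Lambda$, the form $\beta_j$ extends to a \emph{holomorphic} closed one-form on all of $\bc\mathbf{P}^m$. The final step is to invoke the vanishing of global holomorphic one-forms on projective space: $H^0(\bc\mathbf{P}^m,\Omega^1)=0$, so $\beta_j\equiv 0$, which is exactly the claimed identity on $\bc^m$. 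One must also check there are no poles hiding along $E_\infty=\bc\mathbf{P}^m\setminus\bc^m$; but $E_\infty$ was chosen in general position and is not a component of $\Lambda$, and the restriction $\beta_j|_{\bc^m}$ is what we analyze, so the extension argument is applied to the globally defined rational form on $\bc\mathbf{P}^m$ whose only possible poles were along $\Lambda$.

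The main obstacle I anticipate is the residue bookkeeping along $\Lambda$ in codimension one when the components $(f_k=0)$ may be singular or may intersect one another: the clean ``$f_k$ is a coordinate'' normal form is only available at smooth points of a single component away from the other $f_\ell$. I would handle this by working on the smooth locus $\Lambda^{\mathrm{reg}}$ of each component (the complement of a codimension $\ge 2$ set), establishing constancy of the residue there, and then using Hartogs-type extension together with the normality/irreducibility of $(f_k=0)$ to promote the locally constant residue to a single constant $a^j_k$ per component. The remaining verification, that $\beta_j$ genuinely has no residual poles after subtraction, is then a matter of matching the computed residue of $\eta_j$ against that of each $\frac{df_k}{f_k}$ term, which is routine once the residues are identified as constants.
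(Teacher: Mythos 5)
The paper offers no proof of this lemma at all; it is quoted as the well-known Integration Lemma with a citation to \cite{Scardua1}. Your proposal reconstructs the standard argument from that reference: constancy of the residue of a closed one-form with simple poles along each irreducible component of its polar divisor, subtraction of the logarithmic part, and vanishing of global holomorphic one-forms on $\bc {\bf P}^m$. The overall strategy is the right one, and the residue bookkeeping on the smooth locus of each $(f_k=0)$ is handled correctly.

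There is, however, one genuine misstep: the claim that ``the subtracted logarithmic terms have poles only on $\Lambda$,'' from which you conclude that $\beta_j=\eta_j-\sum_k a^j_k\,df_k/f_k$ is holomorphic on all of $\bc {\bf P}^m$. This is false as stated. Each $df_k/f_k$, viewed as a rational one-form on $\bc {\bf P}^m$, acquires a simple pole along the hyperplane at infinity $E_\infty$ with residue $-\deg f_k$ (already visible for $m=1$, $f=z$: in the chart $w=1/z$ one has $dz/z=-dw/w$). Since $\eta_j$ itself is holomorphic near generic points of $E_\infty$ (as $E_\infty$ is in general position and is not a component of $\Lambda$), the form $\beta_j$ a priori has a simple pole along $E_\infty$ with residue $\sum_k a^j_k\deg f_k$, and the appeal to $H^0(\bc {\bf P}^m,\Omega^1)=0$ is not yet available. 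The gap is closed by the global residue relation for closed rational one-forms with simple poles on $\bc {\bf P}^m$ (restrict to a generic line $\bc {\bf P}^1$ and use that the residues of a rational one-form on $\bc {\bf P}^1$ sum to zero): applied to $\beta_j$, whose only possible polar component is the degree-one hypersurface $E_\infty$, it forces $\mathrm{Res}_{E_\infty}\beta_j=0$; a closed form with a simple pole and vanishing residue is holomorphic across that hypersurface by the same local normal form you already used, and only then does $\beta_j\in H^0(\bc {\bf P}^m,\Omega^1)=0$ finish the proof. (Equivalently: $\beta_j$ is closed and holomorphic on $\bc^m$, hence $\beta_j=dg$ for an entire $g$ which must be polynomial since $\beta_j$ is rational; a nonconstant polynomial $g$ would give $dg$ a pole of order $\deg g+1\ge 2$ along $E_\infty$, contradicting the simple-pole bound.) With this one step supplied, your argument is complete.
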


Using this writing we can prove:

\begin{Lemma}
\label{Lemma:transverseintersection} Given $j_1,...,j_r \in
\{1,...,s\}$ the hypersurfaces $\{f_{j_1}=0\}, ..., \{f_{j_r}=0\}$
intersect transversely.

\end{Lemma}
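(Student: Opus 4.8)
The plan is to prove the stronger local statement that at \emph{every} point $p\in\Lambda$ the local branches of $\Lambda$ form a normal crossing divisor; transversality of $\{f_{j_1}=0\},\dots,\{f_{j_r}=0\}$ is then immediate. Fix $p$ and let $B_1=\{h_1=0\},\dots,B_t=\{h_t=0\}$ be the distinct irreducible branches of $\Lambda$ at $p$. By Lemma~\ref{Lemma:integrationlemma} each $\eta_j$ has, near $p$, the form $\eta_j=\sum_{i=1}^{t} a^j_i\,\frac{dh_i}{h_i}+\omega_j$ with $\omega_j$ holomorphic and constant residues $a^j_i\in\bc$. I would split according to whether $p$ is singular for $\fa$. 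Note first that, since the Darboux--Lie system has maximal rank off $\Lambda$ (condition (ii) of Lemma~\ref{Lemma:simplepoles}), the meromorphic $(m-1)$-form $\Omega:=\eta_1\wedge\cdots\wedge\eta_{m-1}$ is nowhere zero on $\bc{\bf P}^m\setminus\Lambda$; in particular $\sing(\fa)\subset\Lambda$.

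The singular case is the clean one. Suppose $p\in\sing(\fa)$. By hypothesis the singularity is linearizable without resonances, so in suitable coordinates $\fa$ is induced by $X=\sum_{k=1}^{m}\lambda_k x_k\,\partial/\partial x_k$ with $\lambda_1,\dots,\lambda_m$ linearly independent over $\Q$. Writing $X(h)=\psi\,h$ for an invariant germ and comparing lowest-degree parts, the nonresonance of the $\lambda_k$ forces the leading term of $h$ to be a single monomial; hence the only irreducible invariant hypersurface germs at $p$ are the coordinate hyperplanes $\{x_k=0\}$. Since $\Lambda$ is $\fa$-invariant, each $B_i$ is one of these, and coordinate hyperplanes are smooth and meet transversely, so the branches form a normal crossing at $p$.

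Now suppose $p\notin\sing(\fa)$. Here I would use a local first-integral chart $(y_1,\dots,y_{m-1},y_m)$ in which $\fa=\{dy_1=\cdots=dy_{m-1}=0\}$; each invariant branch is a union of leaves, so $h_i=h_i(y')$ with $y'=(y_1,\dots,y_{m-1})$, and each $\eta_j$ annihilates $\partial/\partial y_m$. Restricting to a transversal $D=\{y_m=\mathrm{const}\}$ of dimension $n:=m-1$ yields $n$ closed meromorphic one-forms $\beta_j:=\eta_j|_D$ with $\beta_1\wedge\cdots\wedge\beta_n=\Phi\,dy_1\wedge\cdots\wedge dy_n$, where $\Phi$ is finite and nonzero off $\bigcup_i\{h_i=0\}$ by maximal rank. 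Along a generic point of a branch $\{h_i=0\}$ only one logarithmic factor can enter the wedge, so $\Phi$ has at most a simple pole there; granting that it is exactly simple, the divisor of $\Phi$ is $-\sum_i[\{h_i=0\}]$ and therefore $N:=\big(\prod_i h_i\big)\,\beta_1\wedge\cdots\wedge\beta_n$ is a nonvanishing holomorphic volume form at $p$, i.e. $N(p)\ne0$. Expanding $N$ and using $\frac{dh_i}{h_i}\wedge\frac{dh_i}{h_i}=0$, a surviving term at $p$ must pick all $t$ distinct logarithmic factors (otherwise the leftover product $\prod_{i\notin\{i_1,\dots\}}h_i$ vanishes at $p$), so that $t\le n$ and
\[
N(p)=dh_1(p)\wedge\cdots\wedge dh_t(p)\wedge\Xi,
\]
where $\Xi$ is an $(m-1-t)$-form assembled from the $\omega_j(p)$ with residue coefficients. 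Since $N(p)\ne0$, the factor $dh_1(p)\wedge\cdots\wedge dh_t(p)\ne0$, which is exactly the normal crossing condition. Combining the two cases gives normal crossing at every point, hence the asserted transversality.

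The hard part will be the regular case, and precisely the determinant step: proving that maximal rank together with the simple-pole structure forces $\Phi$ to have \emph{exactly} a simple pole along every branch, so that $N(p)\ne0$. The danger is a branch $B_i$ which is polar for some individual $\eta_j$ but for which the residue combination defining $\mathrm{Res}_{B_i}\Omega$ degenerates, so that $\Omega$ fails to be polar there and $N(p)$ vanishes; ruling this out amounts to showing the relevant residue minors are nondegenerate. I expect this to be controlled by the same nonresonance/rank data already exploited in Lemma~\ref{Lemma:extension}, where $\bc$-independence of the residues is forced by linear independence of the eigenvalues over $\Q$; making that dependence quantitative at the crossing locus, rather than at generic points of each branch, is the delicate point of the argument.
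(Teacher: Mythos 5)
Your singular-case argument is sound and coincides with the endgame of the paper's own proof: at a linearizable nonresonant singularity the only irreducible invariant hypersurface germs are the coordinate hyperplanes of the linearizing chart, so the branches of $\Lambda$ through such a point cross normally. The problem is your Case 2 (regular points of $\fa$ on a multiple intersection), and you have correctly located the gap yourself: the argument needs $N(p)=\bigl(\prod_i h_i\bigr)\,\eta_1\wedge\cdots\wedge\eta_{m-1}$ to be nonzero at $p$, i.e.\ the wedge must have a pole of \emph{exactly} the expected order along every branch through $p$. The maximal-rank hypothesis is only available off $\Lambda$ and the simple-pole structure of each individual $\eta_j$ does not prevent the relevant residue minors from degenerating at the crossing locus, in which case $N(p)=0$ and your conclusion $dh_1(p)\wedge\cdots\wedge dh_t(p)\ne 0$ does not follow. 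As written, the regular case is not proved, so the proposal has a genuine gap.

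The paper closes this gap by a different reduction: it never analyzes regular crossing points, but shows they cannot occur. Arguing by contradiction, if $p$ is a point of non-transverse intersection, so $df_{j_1}(p)\wedge\cdots\wedge df_{j_r}(p)=0$, then the codimension-one foliations $\fa_{\eta_j}$ defined by the closed forms $\eta_j$ --- each of which admits every $\{f_k=0\}$ through $p$ as an invariant hypersurface --- fail to be in general position at $p$; consequently the defining integrable system $\{\eta_1,\ldots,\eta_{m-1}\}$ degenerates there and $\fa$ must be singular at $p$. One is then back in your Case 1, where the nonresonant linearization shows the separatrices (which are exactly the local branches of $\Lambda$ at $p$) are transverse, a contradiction. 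So the repair for your write-up is not to prove nondegeneracy of the residue minors at regular crossing points, but to replace Case 2 entirely by the observation that a tangential crossing of $\fa$-invariant hypersurfaces forces a singularity of $\fa$ at that point.
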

\begin{proof}
Indeed, given any point $p\in \{f_{j_1}=0\}\cap  ... \cap
\{f_{j_r}=0\}$ of non transverse intersection, we have
$df_{j_1}(p)\wedge ...\wedge df_{j_r}(p)=0$ and then the
codimension-one foliations $\fa_{\eta_j}$, defined by the
one-forms $\eta_j, \, j \in \{j_1,...,j_r\}$ are not transverse at
$p$. {\it A fortiori}, this implies that $\fa$ has a singularity
at $p$. However, as we have seen in the proof of
Lemma~\ref{Lemma:extension} the manifolds $\{f_j=0\}$ are the
separatrices of $\fa$ at $p$ and therefore they are transverse at
$p$.

\end{proof}

Given a singularity $p\in \sing(\fa)$, because the irreducible
components of $\Lambda$ through $p$ contain the local invariant
hypersurfaces of $\fa$ passing through $p$, we conclude that
$s\leq m$. We claim that  $s=m$. Indeed, thanks to the local
linearization, $\fa$ exhibits $m$ local invariant analytic
hypersurfaces through $p$, and each of these contained in some
irreducible component $\Lambda_j, \, j \in \{1,...,s\}$ of
$\Lambda$, as in the proof of Lemma~\ref{Lemma:extension}. What we
have to prove is that there exists no irreducible component
$\Lambda_j$ containing two different local invariant hypersurfaces
passing through $p$. This is a kind of no self-connection for $p$
in the real dynamics framework. In our case it will be a
consequence of the nonresonance hypothesis. Choose a polynomial
vector field $X$ with isolated singularities on $\bc^m$, such that
$\fa\big|_{\bc^m}$ is induced by $X$ and in particular $\sing(\fa)
= \sing(\fa)\cap \bc^m = \sing(X)$. Given a singularity $p\in
\sing(\fa)$, since  the irreducible components of $\Lambda$
through $p$ contain the local separatrices of $\fa$ through $p$,
we conclude that if some irreducible component $\Lambda_j\ni p $
contains two different local invariant hypersurfaces through $p$
then, because the residue of a closed meromorphic one-form is
constant along an irreducible component of its polar set, we have
two different local invariant hypersurfaces through $p$ for which
the residues of the forms $\eta_1,...,\eta_{m-1}$ are the same.
Since, as in the proof of Lemma~\ref{Lemma:extension},  there is a
linear relation involving the eigenvalues of $DX(p)$ and the
residues of the one-forms $\eta_j$ along the irreducible
components of $\Lambda$ through $p$, we conclude that the
eigenvalues of $DX(p)$ exhibit some resonance, contradiction. Thus
we have $s=m$ and also:

\begin{Lemma}
\label{Lemma:degreeone} If $\sharp [ \sing(\fa) \cap \bc ^m]=1$
then $f_1,...,f_m$ are degree one polynomials.

\end{Lemma}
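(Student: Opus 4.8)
The plan is to reduce the statement to a Bézout count for the $m$ projective hypersurfaces $\overline{\{f_1=0\}},\dots,\overline{\{f_m=0\}}$: I will show that their complete intersection in $\bc {\bf P}^m$ is the single reduced point $p$, whence $\prod_{j=1}^m \deg f_j = 1$ and every $f_j$ is linear.

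First I would fix the local picture at the singularity. Since $E_\infty\cap\sing(\fa)=\varnothing$, every singularity lies in $\bc^m$, so the hypothesis $\sharp[\sing(\fa)\cap\bc^m]=1$ upgrades to $\sing(\fa)=\{p\}$ globally on $\bc {\bf P}^m$. By the linearization without resonance at $p$, the foliation has exactly $m$ local separatrices, which in linearizing coordinates are the coordinate hyperplanes $\{x_k=0\}$ and meet transversely at $p$ with intersection multiplicity one. As established just above, $s=m$ and no irreducible component of $\Lambda$ carries two of these separatrices, so each $\{f_j=0\}$ passes through $p$ and coincides near $p$ with one coordinate hyperplane. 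In particular $p\in\bigcap_{j=1}^m\{f_j=0\}$ and the local intersection multiplicity there is $1$.

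The core step is to prove that $p$ is the \emph{only} point of $\bigcap_{j=1}^m\{f_j=0\}$ in $\bc {\bf P}^m$. Here I would combine two facts. By Lemma~\ref{Lemma:transverseintersection} the hypersurfaces $\{f_j=0\}$ meet transversely wherever they meet; the argument there is valid at any point of $\bc {\bf P}^m$, since a non-transverse intersection forces a singularity of $\fa$, whereas the unique singularity has transverse separatrices. Now suppose $q\in\bigcap_{j=1}^m\{f_j=0\}$ with $q\neq p$. Then $q$ is a regular point of $\fa$, and because each $\{f_j=0\}$ is $\fa$-invariant, the one-dimensional leaf through $q$ is locally contained in every $\{f_j=0\}$, hence in their common intersection. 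But $m$ hypersurfaces meeting transversely at $q$ intersect in a zero-dimensional germ, so the leaf would collapse to a point, forcing $q\in\sing(\fa)$, a contradiction. Therefore $\bigcap_{j=1}^m\{f_j=0\}=\{p\}$, a single transverse point.

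Finally I would invoke Bézout's theorem. The $f_j$ are distinct irreducible polynomials, so the $\overline{\{f_j=0\}}$ share no common component and meet in isolated points; their intersection number is $\prod_{j=1}^m\deg f_j$ counted with multiplicity. Since the unique intersection point $p$ has multiplicity one, we obtain $\prod_{j=1}^m\deg f_j=1$, hence $\deg f_j=1$ for all $j$. I expect the main obstacle to be controlling the complete intersection \emph{at infinity}: a priori further intersection points could lie on $E_\infty$, and excluding them is precisely what the global transversality of Lemma~\ref{Lemma:transverseintersection}, together with $\sing(\fa)=\{p\}$ and the leaf-dimension argument above, accomplishes.
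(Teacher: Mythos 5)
Your proof is correct and follows essentially the same route as the paper's: the paper likewise observes that, by transversality, $\sing(\fa)=\{f_1=0\}\cap\dots\cap\{f_m=0\}\subset\bc^m$ and concludes $\sharp[\sing(\fa)\cap\bc^m]=\prod_{j=1}^m\deg(f_j)=1$ by B\'ezout. Your version merely makes explicit two points the paper leaves implicit, namely why the common intersection coincides with the singular set (the leaf-dimension argument at a regular point) and why no intersection points occur on $E_\infty$.
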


\begin{proof}
In fact, since these polynomials are transverse and
$\sing(\fa)=\{f_{1}=0\} \cap  ... \cap  \{f_{m}=0\}\subset \bc^m$
we have $\sharp[\sing(\fa)\cap \bc ^m]= \prod\limits_{j=1}^m \deg
(f_j)$ so that $\deg(f_j)=1, \forall j=1,...,m$.

\end{proof}

By an affine change of coordinates we can assume that the
singularity $p=0\in \bc^m$ is the origin. The following lemma is
adapted from \cite{Ito-Scardua} and \cite{Bracci-Scardua}:
\begin{Lemma}
\label{Lemma:linearvectorfield} Let $X$ be a polynomial vector
field with isolated singularities on $\bc^m, m \geq 3$. Suppose
that $\eta_j(X)=0, j=1,...,m-1$ where $\{\eta_1,...,\eta_{m-1}\}$
is a system of closed meromorphic one-forms, with simple poles, of
maximal rank in the complement of $\bigcup\limits_{j=1}^{m-1}
(\eta_j)_\infty$ and that the corresponding one-dimensional
foliation  $\fa$ has a linearizable nonresonant singularity at
$p\in \bc^m$. Then $X$ is a  linear vector field in some affine
chart in $\bc^m$.

\end{Lemma}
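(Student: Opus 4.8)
The plan is to read the vector field directly off the residue data of the forms $\eta_j$ together with the invariance of their polar components. By the Integration Lemma~\ref{Lemma:integrationlemma} I write $\eta_j=\sum_{k=1}^{m}a_k^j\,\frac{df_k}{f_k}$, where $\{f_1=0\},\dots,\{f_m=0\}$ are the irreducible components of the polar set (there are exactly $m$ of them, and they are mutually transverse by Lemma~\ref{Lemma:transverseintersection}). Since each $\{f_k=0\}$ is the polar locus of a closed one-form defining $\fa$, it is $\fa$-invariant; hence $X$ is tangent to it, and, $f_k$ being irreducible, $X(f_k)=h_k\,f_k$ for some polynomial $h_k$. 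The hypothesis $\eta_j(X)=0$ then reads $\sum_{k=1}^m a_k^j\,h_k=0$ for $j=1,\dots,m-1$.

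Next I would exploit maximal rank. The residue matrix $A=(a_k^j)$ is $(m-1)\times m$ of rank $m-1$, so its kernel is a single line $\bc\cdot(\mu_1,\dots,\mu_m)$. Evaluating the relations above at each point shows $(h_1(x),\dots,h_m(x))\in\Ker A$ for all $x$, whence $h_k=\mu_k\,\phi$ for one common polynomial $\phi$ and fixed constants $\mu_k$. This gives the basic identity
\[
X(f_k)=\mu_k\,\phi\,f_k,\qquad k=1,\dots,m.
\]
Near the nonresonant linearizable singularity $p$ the transversality of the $f_k$ makes $(f_1,\dots,f_m)$ a local coordinate system, in which this identity reads $X=\phi\cdot\sum_k\mu_k f_k\,\partial_{f_k}$. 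A one-line computation of $DX(p)$ in these coordinates (the off-diagonal terms carry a factor $f_k$ and vanish at $p$) shows that $DX(p)$ is diagonal with entries $\mu_k\,\phi(p)$; these are the eigenvalues $\lambda_k$. Nonresonance of $\{\lambda_k\}$ then forces every $\mu_k\neq0$, the $\mu_k$ linearly independent over $\bq$, and $\phi(p)\neq0$.

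It remains to globalize. On the dense open set where the Jacobian of $(f_1,\dots,f_m)$ is nonzero --- a set that is nonempty since the $f_k$ are coordinates at $p$ --- the differentials $df_1,\dots,df_m$ span, so the identity $df_k(X)=\mu_k\phi f_k$ shows that $\phi(x_0)=0$ forces $X(x_0)=0$. If $\phi$ were nonconstant its zero set would be a hypersurface, giving a positive-dimensional (here dimension $m-1\geq2$) set of zeros of $X$, contradicting isolated singularities; hence $\phi\equiv c$ is a nonzero constant. Then $X(f_k)=c\mu_k f_k$ exhibits each $f_k$ as an irreducible eigenfunction of the derivation $X$. In the setting of Theorem~\ref{Theorem:main} with a single affine singularity, Lemma~\ref{Lemma:degreeone} gives $\deg f_k=1$, so after the affine change of coordinates making the $f_k$ the coordinate functions $x_k$ (this is the promised affine chart) the identity becomes $X(x_k)=c\,\mu_k x_k$, that is $X=c\sum_{k=1}^m \mu_k x_k\,\partial_{x_k}$, a linear diagonal field.

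The main obstacle is exactly this last, global step: upgrading the local linear normal form at $p$ to a genuinely linear $X$ on the whole chart. Everything hinges on showing that the eigenfunctions $f_k$ have degree one and that $\phi$ is constant. The isolated-singularity hypothesis is what forces $\phi$ to be a nonzero constant rather than a nonconstant polynomial, while the nonresonance hypothesis is what excludes higher-degree (resonant) eigenfunctions compatible with $X(f_k)=c\mu_k f_k$ that would otherwise destroy linearity; the reduction $s=m$ and the degree-one conclusion of Lemma~\ref{Lemma:degreeone} are precisely the inputs that close this gap in the present application.
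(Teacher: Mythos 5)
Your proof is correct and follows essentially the same route as the paper's: write the $\eta_j$ in logarithmic form with degree-one, mutually transverse polar components, use invariance to factor $X(f_k)=h_k f_k$, use $\eta_j(X)=0$ to constrain the cofactors, and use the isolated-singularity hypothesis to force the remaining scalar polynomial to be a nonzero constant. The only cosmetic difference is that you extract the one-dimensional kernel of the residue matrix pointwise to get $h_k=\mu_k\phi$ globally, whereas the paper evaluates the same relation at the origin, builds the linear comparison field $Z=\sum_k B_k(0)\,y_k\,\frac{\partial}{\partial y_k}$, and concludes via $X=H\cdot Z$ with $H$ nonvanishing, hence constant --- the two bookkeepings are interchangeable.
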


\begin{proof}
 Let $\Lambda_1,...,\Lambda_m$ be the
irreducible components of $\Lambda \cap \bc^m$ and the irreducible
polynomials $f_1,\dots,f_m \in \bc[z_1,\dots,z_m]$ such that
$\Lambda_j = \{(z_1,\dots,z_m) \in \bc^m :
f_j(z_1,\dots,z_m)=0\}$, $j=1,\dots,m$.  The polynomials $f_j$
have degree one, vanish at the origin $0 = \Sing(X)$ of $\bc^m$
and  intersect transversely at every point. Thus, we can find an
affine change of coordinates $T(z_1,\dots,z_m) = (y_1,\dots,y_m)$
on $\bc^m$ such that $f_j(y_1,\dots,y_m) = y_j$\,, $j=1,\dots,m$.
The one-forms $\eta_j$ $(j=2,\dots,m)$ write as $\eta_j =
\sum\limits_{k=1}^m\al_k^j\cdot\frac{dy_k}{y_k}$ for some $\al_k^j
\in \bc$ and $X = \sum\limits_{j=1}^m A_j(y_1,\dots,y_m)\,
\frac{\po}{\po y_j}$ is polynomial such that $\eta_j(X) \equiv 0$,
$\forall\,j=2,\dots,n$. Since the hyperplanes $\{y_j=0\} =
\Lambda_j$ are $X$-invariant we must have $A_j(y_1,\dots,y_m) =
y_j\cdot B_j(y_1,\dots,y_m)$ for some polynomials $B_j \in
\bc[y_1,\dots,y_m]$.  Thus $X = \sum\limits_{j=1}^m y_j\cdot
B_j(y_1,\dots,y_m)\, \frac{\po}{\po y_j}\,\cdot$ Hence $0 =
\eta_j(X) = \sum\limits_{k=1}^m \al_k^j\cdot B_k(y_1.,\dots,y_m)$,
$\forall\, j \in \{2,\dots,m\}$.  Therefore $\sum\limits_{k=1}^m
\al_k^j\cdot B_k(0,\dots,0) = 0$, $\forall\, j \in \{2,\dots,m\}$.
Let then $Z := \sum\limits_{k=1}^m B_k(0,\dots,0)y_k\,
\frac{\po}{\po y_k}$ be a linear vector field. By construction we
have $\eta_j(Z)=0$, $\forall\,j \in \{2,\dots,n\}$. Therefore the
foliations $\fa(X)$ defined by $X$ and $\fa_Z$ defined by $Z$
coincide on $\bc^m$ and thus $X = H\cdot Z$ for some polynomial
$H$ on $\bc^m$. Since $\Sing(X) = \{0\} = \Sing(Z)$ the polynomial
$H$ has no zeros on $\bc^m$ and it is therefore constant say $H =
\la \in \bc^*$. Thus we have proved that $X$ is linear in the
variables $(y_1,\dots,y_m) \in \bc^m$ of the form
$$
X(y_1,\dots,y_m) = \la\cdot \sum_{k=1}^m B_k(0,\dots,0)y_k\,
\frac{\po}{\po y_k}\,\cdot
$$
This ends the proof.

\end{proof}

The proof of Theorem~\ref{Theorem:main} follows from the above
lemmas. Theorem~\ref{Theorem:spolium} follows from (the proof of)
Lemma~\ref{Lemma:simplepoles}.

\section{Foliations invariant under $G$-transverse actions}
\label{section:lieinvariant}

 Let $V$ be a manifold, $\fa$ a
codimension $\ell$ foliation on $V$ and $G$ a Lie group of
dimension $\dim G=\codim \fa=\ell$.

\begin{Definition}[\cite{Behague-Scardua}]
\label{Definition:lieinvariant} {\rm We  say that $\fa$ is  {\it
invariant under a transverse action} of the group $G$, $\fa$ is
$G$-i.u.t.a. for short, if there is an  action $\Phi\colon G\times
V\rightarrow V$ of $G$  on $V$ such that: (i) the action is {\it
transverse to $\fa$}, i.e., the orbits of this action have
dimension $\ell$ and intersect transversely the leaves of $\fa$
and (ii) $\Phi$ {\it leaves $\fa$ is  invariant}, i.e.,  the maps
$\Phi_{g}: x \mapsto \Phi(g,x)$ take leaves of $\fa$ onto leaves
of $\fa$.}
\end{Definition}

Let $\fa$ be a foliation on $V$ such that $\fa$ is $G$-i.u.t.a. It
is not difficult to prove the existence of a {\it Lie foliation
 structure  for $\fa$ on $V$ of model $G$} in the sense of
Ch. III, Sec. 2 of \cite{Godbillon}. We shall then say that $\fa$
{\it has $G$-transversal structure} and prove (with a
self-contained proof) the existence of a {\it development} for
$\fa$ as in Proposition 2.3, page 153 of \cite{Godbillon} (Ch.III,
Sec. 2).  A sort of strong form of this procedure can be found in
\cite{Behague-Scardua},  Section~4 with a self-contained proof
(Proposition~3).

In this section all foliations are assumed to be holomorphic,
though our arguments hold for  class $C^\infty$. First of all we
remark the existence of a Lie transverse structure for the
foliation invariant under a Lie group transverse action:

\begin{Proposition}\label{Proposition:Lieinvariant} Let $V^m$ be a manifold
equipped with a  codimension $\ell$ foliation $\fa$ invariant
under a transverse action of a Lie group $G$. Then $\fa$ has a Lie
transverse structure of model $G$. Indeed, there exists an
integrable system $\{\Omega_{1},...,\Omega_{\ell}\}$ of one-forms
defining $\fa$ on $V$ with
$d\Omega_{k}=\sum\limits_{i<j}c_{ij}^{k}\Omega_{i}\wedge
\Omega_{j}$, where $\{c_{ij}^{k}\}$ are the  structure constants
of the Lie algebra of $G$.
\end{Proposition}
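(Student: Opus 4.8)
The plan is to build the forms $\Omega_1,\dots,\Omega_\ell$ directly from the infinitesimal generators of the transverse action and then to read off the structure equations from the bracket relations of those generators. Fix a basis $\omega_1,\dots,\omega_\ell$ of the Lie algebra of $G$ with $[\omega_i,\omega_j]=\sum_k c_{ij}^k\,\omega_k$, and let $\xi_1,\dots,\xi_\ell$ be the associated fundamental vector fields on $V$, so that $\xi_i$ is the velocity field of the flow $t\mapsto\Phi(\exp(t\omega_i),\cdot)$. Since the orbits of $\Phi$ have dimension $\ell=\codim\fa$ and meet the leaves transversely, the isotropy groups are discrete, so the $\xi_i$ are pointwise linearly independent and span the orbit directions; transversality then gives a splitting $T_xV=T_x\fa\oplus\mathrm{span}\{\xi_1(x),\dots,\xi_\ell(x)\}$ at every $x\in V$. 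I would then define $\Omega_1,\dots,\Omega_\ell$ to be the unique one-forms that annihilate $T\fa$ and satisfy $\Omega_k(\xi_j)=\delta_{kj}$. These are globally well-defined because the $\xi_j$ are global and everywhere transverse, and their common kernel is exactly $T\fa$, so $\{\Omega_1,\dots,\Omega_\ell\}$ is an integrable rank $\ell$ system defining $\fa$.

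Two properties of the generators carry the whole argument. First, transversality plus local freeness yields the splitting above. Second, the invariance hypothesis that each $\Phi_g$ maps leaves to leaves means the flow of every $\xi_i$ preserves $\fa$, hence $[\xi_i,W]\in\Gamma(T\fa)$ whenever $W\in\Gamma(T\fa)$. The relation $[\xi_i,\xi_j]=\sum_k c_{ij}^k\,\xi_k$ then transports the structure constants of $G$ onto $V$.

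To verify $d\Omega_k=\sum_{i<j}c_{ij}^k\,\Omega_i\wedge\Omega_j$ I would evaluate both sides on the three types of pairs afforded by the splitting, using $d\Omega_k(Y,Z)=Y(\Omega_k(Z))-Z(\Omega_k(Y))-\Omega_k([Y,Z])$. On a pair $(\xi_i,\xi_j)$ the first two terms vanish because $\Omega_k(\xi_i)$ and $\Omega_k(\xi_j)$ are constant, leaving $d\Omega_k(\xi_i,\xi_j)=-\Omega_k([\xi_i,\xi_j])=c_{ij}^k$, which matches $(\sum_{a<b}c_{ab}^k\,\Omega_a\wedge\Omega_b)(\xi_i,\xi_j)$. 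On a mixed pair $(\xi_i,W)$ with $W\in\Gamma(T\fa)$ all three terms vanish, since $\Omega_k(W)=0$, $\Omega_k(\xi_i)$ is constant, and $\Omega_k([\xi_i,W])=0$ by invariance, while the right-hand side vanishes because $\Omega_a(W)=0$. On a pair $(W_1,W_2)$ of leaf-tangent fields the right-hand side is zero and the left reduces to $-\Omega_k([W_1,W_2])=0$ because $\fa$ is integrable. As these cases exhaust a spanning set of pairs, the two two-forms agree.

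The one genuinely delicate point is sign and convention bookkeeping: whether the fundamental vector fields realize the structure constants as $[\xi_i,\xi_j]=+\sum_k c_{ij}^k\,\xi_k$ or with the opposite sign, and correspondingly the sign produced by $-\Omega_k([\xi_i,\xi_j])$ in the intrinsic formula. I would settle this once by fixing the convention that makes $X\mapsto\xi_X$ a Lie algebra homomorphism, replacing $\Phi$ by the action of the opposite group if necessary, and absorbing any residual sign into the identification of the $c_{ij}^k$; everything else follows directly from transversality (well-definedness and the splitting) and invariance (the vanishing of the mixed bracket term). Finally, applying the converse part of the Darboux--Lie Theorem~\ref{Theorem:Darboux-Lie} to the system $\{\Omega_1,\dots,\Omega_\ell\}$ produces the local submersions to $G$ with left-translation transition maps, which is exactly the asserted Lie transverse structure of model $G$.
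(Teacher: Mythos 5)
Your argument is correct, and it takes a genuinely different route from the one in the paper. You construct the forms globally and intrinsically: you take the fundamental vector fields $\xi_1,\dots,\xi_\ell$ of the action, use local freeness plus transversality to get the pointwise splitting $T_xV=T_x\fa\oplus\mathrm{span}\{\xi_i(x)\}$, define $\Omega_k$ as the dual coframe annihilating $T\fa$, and then verify the structure equations by evaluating $d\Omega_k$ on the three kinds of pairs, feeding in $[\xi_i,\xi_j]=\pm\sum_k c_{ij}^k\xi_k$ and the fact that invariance forces $[\xi_i,W]\in\Gamma(T\fa)$ for leafwise $W$. The paper instead works chart by chart: it builds distinguished charts $\phi=(\alpha,\beta)\colon U\to D_1\times D_2\subset \bc^{m-\ell}\times G$ adapted to the action, defines $\Omega_k=(\pi_2\circ\phi)^*\omega_k$ as pullbacks of the Maurer--Cartan forms of $G$, uses left-invariance of the $\omega_k$ to see that the local definitions agree on overlaps, and gets the structure equations for free by pulling back $d\omega_k=\sum_{i<j}c_{ij}^k\,\omega_i\wedge\omega_j$. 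What your approach buys is that well-definedness is immediate (the $\xi_i$ are global) and no patching or transition-map analysis is needed; it is essentially the flat-connection viewpoint that the paper itself mentions in Remark~\ref{Remark:monatshefte} as an alternative. What the paper's approach buys is that the local submersions to $G$ and the left-translation cocycle --- i.e.\ the Lie transverse structure itself --- are exhibited directly in the course of the construction, rather than recovered afterwards from the converse direction of the Darboux--Lie Theorem~\ref{Theorem:Darboux-Lie} as you do. The one point you rightly flag, the sign convention relating $[\xi_i,\xi_j]$ to the $c_{ij}^k$ appearing in $d\omega_k$, is genuine but harmless and is resolved exactly as you describe.
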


\noindent{\bf Proof}. Let $\Phi$ a transverse action of $G$ on a
manifold $V$ of dimension $m$, $\fa$ a codimension $\ell$
foliation invariant under the Lie group transverse action $\Phi$,
$\{ X_{1},...,X_{\ell}\}$ a basis of Lie algebra of $G$ and $\{
\omega_{1},...,\omega_{\ell}\}$ the corresponding dual basis. We
have $d\omega_{k}=\sum\limits_{i<j}c_{ij}^{k}\omega_{i}\wedge
\omega_{j}$. We consider an open cover $\{ U_{i}\}$ of $V$ by
coordinate systems of $\fa$: given by local charts
$\phi_{i}=(\alpha_{i},\beta_{i})\colon U_{i}\subset V\rightarrow
D_{1}^{i}\times D_{2}^{i}\subset {\mathbb
C}^{m-\ell}\times{\mathbb C}^{\ell}$, and the projection
$\pi_{2}\colon{\mathbb C}^{m-\ell}\times G\rightarrow G$ on the
second factor. Given any point $x \in V$ we denote by $\fa_x$ the
leaf of $\fa$ through  $x$. Then each map $\pi_{2}\circ \phi_{i}$
defined by $\pi_{2}\circ \phi_{i}(x)=\beta_{i}(x)$ is a submersion
and the plaques of $\fa$ in $U_i$ are given by the intersections
${\fa}_{x}\cap U_{i}=\phi_{i}^{-1}(D_{1}^{i}\times \{
\beta_{i}(x)\})=(\pi_{2}\circ \phi_{i})^{-1}(\beta_{i}(x))$, that
is, the restriction ${\fa}|_{U_i}$ is given by $\pi_{2}\circ
\phi_{i}=$constant. Thus, if $U_{i}\cap U_{j}\not=\emptyset$ then
there exists a locally constant map $\gamma_{ij}\colon U_{i}\cap
U_{j}\rightarrow \{ \mbox{left translations on } G\}$ such that
$\pi_{2}\circ \phi_{i}(x)=\gamma_{ij}(x)(\pi_{2}\circ
\phi_{j}(x))$ for all $x\in U_{i}\cap U_{j}$. Now, given a point
$p\in V$, there exists a local chart $(U,\phi=(\alpha,\beta))$ of
$\fa$ with $\phi\colon U\rightarrow D_{1}\times D_{2}\subset
{\mathbb C}^{m-\ell}\times G,\ \phi(p)=(0,e)$ and $\phi$ takes
every leaf of ${\fa}|_{U}$ to a fiber ${\mathbb
C}^{m-\ell}\times\{$constant$\}$ of ${\mathbb C}^{m-\ell}\times
G$. We may write $\phi(\Phi_{g}(x))=(\xi(x,g),g)$, for all $(x,g)$
close to $(0,e)$.

 We define  one-forms in
$\Omega_1,...,\Omega_\ell$ in $V$ by setting in $U$ the definition
$\Omega_{1}=(\pi_{2}\circ
\phi)^{*}\omega_{1},...,\Omega_{\ell}=(\pi_{2}\circ
\phi)^{*}\omega_{\ell}$. Since $\omega_i$ is left-invariant (that
is, $L_{g}^{*}\omega_{i}=\omega_{i},\forall g\in G$) we conclude
that $\Omega_{i}$ is well defined. In $U$ we have
$\Omega_{k}=(\pi_{2}\circ \phi)^{*}\omega_{k}=(\pi_{2}\circ
\phi)^{*}d\omega_{k}$ and then
$d\Omega_{k}=(\pi_{2}\circ\phi)^{*}(\sum\limits_{i<j}c_{ij}^{k}\omega_{i}\wedge
\omega_{j})=\sum\limits_{i<j}c_{ij}^{k}(\pi_{2}\circ
\phi)^{*}\omega_{i}\wedge (\pi_{2}\circ
\phi)^{*}\omega_{j}=\sum\limits_{i<j}c_{ij}^{\nu}\Omega_{i}\wedge
\Omega_{j}$. This shows that $d\Omega_{\nu}\wedge
\Omega_{1}\wedge...\wedge \Omega_{\ell}=0$, because
$c_{ij}^{\nu}=c_{ji}^{\nu}$ and then, by Frobenius' Theorem
\cite{Camacho-LinsNeto}, the system $\{
\Omega_{1},...,\Omega_{\ell}\}$ defines an integrable field of
$\ell$-plans $\cal S$. The condition $T_{p}{\fa}\oplus
T_{p}\Phi_{p}(G)=T_{p}V$ implies that any vector, tangent to
$\fa$, has no projection over $G$. We have $d\pi_{2}(x,y)= \left(
\begin{array}{cc}
  0 & 0 \\
  0 & \Id \\
\end{array}
\right)\in M_{\ell\times m}({\mathbb C})$, so
$d\pi_{2}(x,y)(v,u)=u$, that is, $d\pi_{2}(x,y)|_{{\mathbb
C}^{m-\ell}\times \{0\}}\equiv 0$. For a tangent vector $v=(X,0)$
on ${\fa}_{p}$ we have $\Omega_{j}(p)(X,0)=(\pi_{2}\circ
\Phi)^{*}\omega_{j}(p)(X,0)=\omega_{j}(\pi_{2}\circ
\Phi(p))d(\pi_{2}\circ \Phi)(p)(X,0)=0$ because
$d(\Phi(p)(X,0)\in{\mathbb C}^{m-\ell}\times \{0\}$. Hence, every
one-form $\Omega_{j}$ is zero along the leaves of $\fa$ and, since
$\dim\ {\fa}= \dim\ {\cal S}$, we conclude that ${\fa}= {\cal S}$.
\qed

\begin{Remark}
\label{Remark:monatshefte} {\rm
Proposition~\ref{Proposition:Lieinvariant} above can be proved
using the fact that the connection tangent to a foliation is flat
and therefore its Cartan's curvature form is zero (see
\cite{Behague-Scardua}), nevertheless we have chosen to give
explicit constructive arguments which we think may be useful for
an eventual consideration of the singular case.}
\end{Remark}

Proposition~\ref{Proposition:Lieinvariant} and
Theorem~\ref{Theorem:main} then promptly give:

\begin{Corollary}
\label{Corollary:Lieinvariant} Let $\fa$ be a one-dimensional
holomorphic foliation with singularities on $\bc {\bf P}^m$.
Assume that:
\begin{enumerate}
\item The singularities of $\fa$ are linearizable without
resonances.

\item There is an invariant codimension one algebraic subset
$\Lambda \subset \bc {\bf P}^m$  such that $\fa$ is invariant
under a $G$-transverse action in $\bc {\bf P}^m \setminus
\Lambda$.
\end{enumerate}

Then $\fa$ is a logarithmic foliation. If  moreover $\sharp
[\sing(\fa) \cap \bc ^m]=1$ for an affine space $\bc ^m \subset
\bc {\bf P}^m$ such that $\bc {\bf P}^m\setminus \bc^m$ is in
general position with respect to $\fa$  then $\fa$ is
linearizable.

\end{Corollary}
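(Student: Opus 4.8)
The plan is to exploit the fact that the polar divisor of the closed forms $\eta_j$ consists of $X$-invariant hypersurfaces through $p$, to reduce these to the coordinate hyperplanes by an affine change of coordinates, and then to show that $X$ must be proportional to its own linear part.

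First I would record the structure of the polar set. Since each $\eta_j$ is closed with simple poles, every irreducible component of its polar set is $\fa$-invariant, and by the local linearization together with the transversality and counting already established (Lemma~\ref{Lemma:transverseintersection} and the subsequent $s=m$ discussion), exactly $m$ irreducible components $\Lambda_1=\{f_1=0\},\dots,\Lambda_m=\{f_m=0\}$ pass through $p$; these are the local invariant hypersurfaces of $X$ at $p$, and in the single-singularity situation they are cut out by degree-one polynomials (Lemma~\ref{Lemma:degreeone}). Because these $m$ hyperplanes vanish at $p$ and meet transversely, an affine change of coordinates $T(z)=y$ puts them in the normal form $f_j(y)=y_j$, so that $\Lambda_j=\{y_j=0\}$ are the coordinate hyperplanes and, by the Integration Lemma~\ref{Lemma:integrationlemma}, $\eta_j=\sum_{k=1}^m \alpha_k^j\,\frac{dy_k}{y_k}$ with $\alpha_k^j\in\bc$.

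Next I would constrain $X$. Writing $X=\sum_{j=1}^m A_j(y)\,\frac{\partial}{\partial y_j}$, invariance of each coordinate hyperplane $\{y_j=0\}$ forces $A_j=y_j B_j$ for polynomials $B_j$, hence $X=\sum_{j=1}^m y_j B_j(y)\,\frac{\partial}{\partial y_j}$. The condition $\eta_j(X)\equiv 0$ then reads $\sum_{k=1}^m \alpha_k^j B_k(y)\equiv 0$ for every $j$; evaluating at the origin gives $\sum_{k=1}^m \alpha_k^j B_k(0)=0$. I would then introduce the linear diagonal field $Z:=\sum_{k=1}^m B_k(0)\,y_k\,\frac{\partial}{\partial y_k}$, which by the previous identity satisfies $\eta_j(Z)=0$ for all $j$. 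Since the system $\{\eta_1,\dots,\eta_{m-1}\}$ has maximal rank, it determines the one-dimensional foliation $\fa$ uniquely, so both $X$ and $Z$ are tangent to $\fa$ and therefore $X=H\cdot Z$ for some rational function $H$ (indeed polynomial, since comparing components gives $H=B_k/B_k(0)$).

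The last step, and the point where the nonresonance hypothesis is essential, is to show $H$ is a nonzero constant. The diagonal entries of $DX(0)$ are precisely $B_1(0),\dots,B_m(0)$, so these are the eigenvalues of the linear part at the nonresonant singularity; nonresonance makes them $\bq$-linearly independent, in particular all nonzero, whence $\Sing(Z)=\{0\}$. Comparing with $\Sing(X)=\{0\}$ in the relation $X=H\cdot Z$ shows that the zero set $\{H=0\}$ is contained in $\{0\}$; as $\{H=0\}$ is either empty or a hypersurface, it must be empty, so the polynomial $H$ has no zeros on $\bc^m$ and is therefore a nonzero constant $\lambda\in\bc^*$. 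Thus $X=\lambda Z$ is linear in the coordinates $(y_1,\dots,y_m)$, as claimed. I expect the main obstacle to be exactly this final proportionality argument: one must ensure that passing from $X$ to its linear model $Z$ does not enlarge the singular locus, and it is precisely nonresonance, forcing every eigenvalue to be nonzero, that guarantees $\Sing(Z)=\{0\}$ and hence that $H$ cannot vanish.
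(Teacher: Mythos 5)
There is a genuine gap, and it is precisely the step that makes this statement a corollary rather than a restatement of Theorem~\ref{Theorem:main}. The hypothesis here is \emph{not} that $\fa$ has a $G$-transverse structure on $\bc {\bf P}^m\setminus\Lambda$, but that $\fa$ is \emph{invariant under a transverse action} of $G$ there (Definition~\ref{Definition:lieinvariant}). Your argument begins by invoking ``the closed forms $\eta_j$'' and the lemmas of Section~4 (Lemma~\ref{Lemma:integrationlemma}, Lemma~\ref{Lemma:transverseintersection}, Lemma~\ref{Lemma:degreeone}), all of which were established under the hypothesis of a $G$-transverse structure. Nothing in your proposal explains why the existence of a transverse $G$-action produces such a structure, so the closed one-forms whose polar divisor you analyze are never shown to exist under the stated hypotheses. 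The missing bridge is Proposition~\ref{Proposition:Lieinvariant}: from a dual basis $\{\omega_1,\dots,\omega_\ell\}$ of the Lie algebra of $G$ one pulls back well-defined one-forms $\Omega_k=(\pi_2\circ\phi)^*\omega_k$ (well-definedness uses left-invariance of the $\omega_k$), verifies $d\Omega_k=\sum_{i<j}c_{ij}^k\,\Omega_i\wedge\Omega_j$, and concludes that $\fa$ has a Lie transverse structure of model $G$ on $\bc {\bf P}^m\setminus\Lambda$. Once that is in place, Theorem~\ref{Theorem:main} applies verbatim and gives both conclusions; this two-line combination is the paper's entire proof.

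A secondary issue: even granting the transverse structure, your text only addresses the ``moreover'' clause (linearizability when $\sharp[\sing(\fa)\cap\bc^m]=1$), and what you write there is essentially a reproduction of the proof of Lemma~\ref{Lemma:linearvectorfield} (hyperplanes $\{y_j=0\}$, the factorization $A_j=y_jB_j$, the comparison field $Z=\sum_k B_k(0)y_k\,\partial/\partial y_k$, and $X=H\cdot Z$ with $H$ constant because $\Sing(X)=\{0\}=\Sing(Z)$). That portion is correct and matches the paper, including the role of nonresonance in forcing $B_k(0)\ne 0$. But the first conclusion, that $\fa$ is logarithmic, is never argued from the corollary's hypotheses; it requires the chain Proposition~\ref{Proposition:Lieinvariant} $\Rightarrow$ $G$-transverse structure $\Rightarrow$ Lemma~\ref{Lemma:extension} and Lemma~\ref{Lemma:simplepoles}, which your proposal skips entirely.
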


\bibliographystyle{amsalpha}

\begin{tabular}{ll}
 Bruno Sc\'ardua\\
 Instituto de  Matem\'atica C.P. 68530\\
Universidade Federal do Rio de Janeiro\\
21.945-970 Rio de Janeiro-RJ\\
 BRAZIL\\
scardua@impa.br
\end{tabular}
\begin{tabular}{ll}
Albet\~{a} Costa Mafra\\
 Instituto de  Matem\'atica - \\
 Universidade Federal do Rio de Janeiro\\
  Caixa Postal 68530\\
CEP. 21.945-970 Rio de Janeiro-RJ\\
 BRAZIL
 e-mail: albetan@im.ufrj.br
\end{tabular}

\end{document}